\numberwithin{equation}{section}
\theoremstyle{plain}
\newtheorem{theorem}[subsubsection]{Theorem}
\newtheorem{lemma}[subsubsection]{Lemma}
\newtheorem{cor}[subsubsection]{Corollary}
\newtheorem{conj}[subsubsection]{Conjecture}
\theoremstyle{definition}
\newtheorem{defn}[subsubsection]{Definition}
\newtheorem{remark}[subsubsection]{Remark}
\newtheorem{exam}[subsubsection]{Example}
\def\AA{\mathbb{A}}
\def\FF{\mathbb{F}}
\def\GG{\mathbb{G}}
\def\QQ{\mathbb{Q}}
\def\RR{\mathbb{R}}
\def\SS{\mathbb{S}}
\def\TT{\mathbb{T}}
\def\WW{\mathbb{W}}
\def\ZZ{\mathbb{Z}}
\newcommand\cF{\mathcal{F}}
\newcommand\cG{\mathcal{G}}
\newcommand\cK{\mathcal{K}}
\newcommand\cL{\mathcal{L}}
\newcommand\cN{\mathcal{N}}
\newcommand\cO{\mathcal{O}}
\newcommand\cS{\mathcal{S}}
\def\bI{\mathbf{I}}
\def\bJ{\mathbf{J}}
\def\bK{\mathbf{K}}
\def\bP{\mathbf{P}}
\def\bQ{\mathbf{Q}}
\def\bT{\mathbf{T}}
\newcommand\frA{\mathfrak{A}}
\newcommand\frB{\mathfrak{B}}
\newcommand\frD{\mathfrak{D}}
\newcommand\frI{\mathfrak{I}}
\newcommand\frJ{\mathfrak{J}}
\newcommand\frP{\mathfrak{P}}
\newcommand\frc{\mathfrak{c}}
\newcommand\frg{\mathfrak{g}}
\newcommand\frl{\mathfrak{l}}
\newcommand\fm{\mathfrak{m}}
\newcommand\frn{\mathfrak{n}}
\newcommand\frp{\mathfrak{p}}
\newcommand\frq{\mathfrak{q}}
\newcommand\frs{\mathfrak{s}}
\newcommand\frt{\mathfrak{t}}
\newcommand\frz{\mathfrak{z}}
\newcommand\AS{\textup{AS}}
\newcommand\Av{\textup{Av}}
\newcommand{\ch}{\textup{char}}
\newcommand\ev{\textup{ev}}
\newcommand{\fl}{f\ell}
\newcommand\FT{\mathrm{FT}}
\newcommand\Gal{\textup{Gal}}
\newcommand{\ind}{\textup{ind}}
\newcommand\Lie{\textup{Lie}\ }
\newcommand\nil{\textup{nil}}
\newcommand{\Nm}{\textup{Nm}}
\newcommand\Out{\textup{Out}}
\newcommand\pr{\textup{pr}}
\newcommand\res{\textup{res}}
\newcommand\Spec{\textup{Spec}\ }
\newcommand\Stab{\textup{Stab}}
\newcommand{\Tr}{\textup{Tr}}
\newcommand{\tr}{\textup{tr}}
\newcommand{\val}{\textup{val}}
\DeclareMathOperator{\colim}{colim}
\DeclareMathOperator{\CS}{CS}
\newcommand\Hom{\textup{Hom}}
\newcommand\Mor{\textup{Mor}}
\newcommand\uHom{\underline{\Hom}}
\newcommand\SL{\textup{SL}}
\renewcommand\sl{\mathfrak{sl}}
\newcommand{\Gm}{\GG_m}
\newcommand{\Ad}{\textup{Ad}}
\newcommand{\der}{\textup{der}}
\newcommand\xch{\mathbb{X}^*}
\newcommand\xcoch{\mathbb{X}_*}
\newcommand\upH{\textup{H}}
\newcommand{\Qlbar}{\overline{\QQ}_\ell}
\newcommand{\const}[1]{\overline{\QQ}_{\ell,#1}}
\renewcommand{\j}[1]{\langle{#1}\rangle}
\newcommand{\wt}[1]{\widetilde{#1}}
\newcommand{\wh}[1]{\widehat{#1}}
\newcommand\quash[1]{}
\newcommand\un{\underline}
\newcommand{\bu}{\bullet}
\newcommand{\ov}{\overline}
\newcommand{\tl}[1]{[\![#1]\!]}
\newcommand{\lr}[1]{(\!(#1)\!)}
\newcommand\sss{\subsubsection}
\newcommand\op{\oplus}
\newcommand\ot{\otimes}
\newcommand{\sslash}{\mathbin{/\mkern-6mu/}}
\renewcommand\c\circ
\newcommand\vn{\varnothing}
\newcommand\mt{\mapsto}
\newcommand{\incl}{\hookrightarrow}
\newcommand{\isom}{\stackrel{\sim}{\to}}
\newcommand{\bij}{\leftrightarrow}
\newcommand{\surj}{\twoheadrightarrow}
\newcommand{\orr}{\overrightarrow}
\newcommand\xr{\xrightarrow}
\newcommand{\ari}{\ar@{^{(}->}} 
\renewcommand\a\alpha
\renewcommand\b\beta
\newcommand\g\gamma
\newcommand\G\Gamma
\renewcommand\d\delta
\newcommand\D\Delta
\newcommand{\e}{\epsilon}
\renewcommand\r\rho
\newcommand{\io}{\iota}
\renewcommand{\t}{t_E}
\newcommand{\y}{\eta}
\newcommand{\z}{\zeta}
\renewcommand{\l}{\lambda}
\renewcommand{\L}{\Lambda}
\newcommand{\om}{\omega}
\newcommand\hs{\heartsuit}
\renewcommand\v{\vee}
\newcommand\hZ{\widehat{\ZZ}}
\newcommand\PD{\textup{PD}}
\newcommand\PCD{\textup{PCD}}
\newcommand\CD{\textup{CD}}
\newcommand\ECD{\textup{ECD}}
\newcommand\OS{\textup{OS}}
\newcommand\cInd{\textup{cInd}}
\newcommand\Yu{\textup{Yu}}
\title{Character sheaves on loop Lie algebras: polar partition}
\dedicatory{To G\'erard Laumon with deep gratitude}
\author{Bao Ch\^au Ng\^o}
\address{Department of Mathematics, University of Chicago, Chicago, IL}
\email{ngo@uchicago.edu}
\author{Zhiwei Yun}
\thanks{N.B.C. is supported partially by the NSF. Z.Y. is supported partially by the Simons Foundation.}
\address{Department of Mathematics, Massachusetts Institute of Technology, Cambridge, MA}
\email{zyun@mit.edu}
\date{}
\subjclass[2020]{Primary 22E67; Secondary 22E50, 14F08}
\keywords{}
\begin{document}

\begin{abstract}
In this paper we propose a partition of loop Lie algebras into invariant sets parametrized by polar data. Our polar partition is motivated by J-K. Yu's construction of supercuspidal representations and leads to a conjectural construction of character sheaves on loop Lie algebras. 
\end{abstract}

\maketitle

\tableofcontents

\section{Introduction}
For a connected reductive group $G$ over a field $k$, Lusztig introduced a class of $G$-equivariant simple perverse sheaves on $G$ called {\em character sheaves}. When $k=\FF_{q}$ is a finite field, the pointwise Frobenius traces of these sheaves are closely related to irreducible characters of the finite Chevalley group $G(\FF_{q})$. 

On the other hand, Lusztig \cite{L-Fourier} and Mirkovi\'c \cite{mirkovic_character_2004} introduce analogs of character sheaves on the Lie algebra $\frg$. It turns out that character sheaves on $\frg$ are exactly Fourier transforms of $G$-equivariant simple perverse sheaves supported on a single co-adjoint orbit closure, which can be used as a definition of character sheaves on $\frg$. 

Passing from a finite field to a local function field $F=\FF_q\lr{t}$, a theory of character sheaves on the loop group $LG=G\lr{t}$ has been long expected. They are supposed to be related to distributional characters of $G(F)$ in a way similar to finite Chevalley groups. The construction of character sheaves on the loop group $LG$ is an active research topic, see  \cite{L-unip}, \cite{BKV0} and \cite{BC}, to name a few, although the picture is far from complete.

In this article, we propose a definition of character sheaves on the loop Lie algebra $L\frg=\frg\lr{t}$. We replace $\FF_q$ by an algebraically closed field $k$ of characteristic $p$, and still use $F$ to denote $k\lr{t}$. The proposed character sheaves form a full subcategory $\CS(L\frg)$ of loop group equivariant sheaves on $L\frg$. The rough idea is to mimic the Fourier transform definition of character sheaves on $\frg$ mentioned above. However, the co-adjoint orbits in the dual loop algebra $L\frg^*$ are too "thin" to yield reasonable notion of character sheaves after Fourier transform; we need to thicken the co-adjoint orbits. Making the thickening precise led to the notion of {\em polar data} that is crucial for our proposal.   A polar datum $(M,\l)$ consists of a twisted Levi subgroup $M\subset G$ over $F$,  and $\l$ is a certain Laurent tail in the dual of the center of $\fm=\Lie M$. In Section 2, we define the notion of polar data and construct the partition of the loop space of the dual vector space $\frg^*$ of the Lie algebra $\frg$ by polar data $(M,\lambda)$ (Theorem \ref{th:partition c*}). We explain how our polar data are related to the data that J-K. Yu used in his construction of supercuspidal representations, see \cite{Yu} and \cite{fintzen_construction_2021}.

In Section 3, for each polar datum $(M,\l)$, or rather a more refined notion called polar-cuspidal datum $\xi=(M,\l, L,\cO,\cL)$, we explain how to define a full subcategory $\CS_{\xi}(L\frg)$ inside the dg category of $LG$-equivariant sheaves on $L\frg$ via Fourier transform. The category $\CS(L\frg)$ turns out to be the direct sum of $\CS_{\xi}(L\frg)$ for various polar-cuspidal data $\xi$ (Theorem \ref{th:block}), which can be viewed as a linearized geometric analog of the Bernstein decomposition. Moreover, in Section \ref{ss:compare Yu}, we exhibit in a precise way the parallelism between our character sheaves and characters of J.K.Yu's supercuspidal representations, giving justification for the nomenclature.


Because the foundational materials necessary to establish properties of the Fourier transform of equivariant perverse sheaves on loop Lie algebras are still being worked out, full proofs of theorems stated in Section 3 will appear elsewhere. Further study of geometric properties of these character sheaves will rely on techniques developed by Bouthier--Kazhdan--Varshavsky \cite{BKV} in their study of the affine Springer sheaf.


\subsection*{Acknowledgement} 

Our mathematical works have been profoundly influenced by Laumon's ideas. This paper is no exception as it aims at being an application of Laumon's theorem on the Fourier-Deligne transform to representation theory. We dedicate this work to him with our deepest gratitude.

The authors thank A.Bouthier for helpful comments.

\section{Polar data}

After setting up notations for discussing loop groups and loop Lie algebras, we define the key new notion of polar data and polar subsets of a loop Lie algebra.

\subsection{Local function fields}
Let $k$ be an algebraically closed field of characteristic $p$, and $F=k\lr{t}$ be the field of Laurent series in one variable $t$ over $k$. Write $\cO_F=k\tl{t}$ and $\fm_F=t\cO_F$. Let $\om(F)=F dt$ be the one-dimensional $F$-vector space of K\"ahler differentials of $F$ over $k$, and let $\om(\cO_F)=\cO_F dt$. We have a residue map
\begin{equation*}
\res_F: \om(F)\to k
\end{equation*} 
satisfying $\res_{F}(\frac{dt}{t})=1$.

For any tamely ramified finite extension $E/F$ with uniformizer $t_E$ (for example we may take $t_E=t^{1/m}$ for $m=[E:F]$) , we have $\om(E)=Ed t_E=Edt$, and $\om(\cO_{E})=\cO_{E}dt_E$. We normalize the residue map $\res_{E}: \om(E)\to k$ so that $\res_{E}(\frac{dt}{t})=1$, equivalently, $\res_{E}(\frac{dt_E}{t_E})=[E:F]^{-1}$. 

Let $F_{\infty}=\cup_{(p,n)=1}k\lr{t^{1/n}}$ be a maximally  tamely ramified extension of $F$. Then the Galois group 
\begin{equation*}
\Gal(F_{\infty}/F)\cong \prod_{(p,n)=1}\mu_{n}(k)=:\hZ'(1).
\end{equation*}
We denote the valuation on $F_{\infty}$ by
\begin{equation*}
\val: F^{\times}_{\infty}\to \QQ
\end{equation*}
normalized by $\val(t)=1$. Let
\begin{equation*}
\om(F_{\infty}):=\bigcup_{E\subset F_{\infty}, [E:F]<\infty}\om(E), \quad \om(\cO_{F_{\infty}})=\bigcup_{E\subset F_{\infty}, [E:F]<\infty}\om(\cO_{E}).
\end{equation*}
The residue maps for finite extensions $E\subset F_\infty$ are compatible with inclusions, and they together define a residue map on $\om(F_{\infty})$
\begin{equation*}
\res: \om(F_{\infty})\to k.
\end{equation*}

\subsection{Tamely ramified reductive groups}
Let $G$ be a connected reductive group over $F$ that splits over a tamely ramified extension of $F$. 
Let $G_0$ be a split reductive group over $k$ such that there exists an isomorphism $G\times_F F^s \simeq G_0\times_k F^s$, which gives rise to an element ${\rm H}^1(F,{\rm Aut}(G_0(F^s)))$. Recall the exact sequence of automorphisms of $G_0(F^s)$ 
$$1\to G_0^{\rm ad}(F^s) \to {\rm Aut}(G_0(F^s)) \to {\rm Out}(G_0(F^s)) \to 1$$
where ${\rm Out}(G_0)(F^s)={\rm Out}(G_0)$ is the discrete group of outer automorphisms of $G_0$. Te map ${\rm Aut}(G_0(F^s)) \to {\rm Out}(G_0)$ induces a map
$${\rm H}^1(F,{\rm Aut}(G_0(F^s))) \to {\rm H}^1(F,{\rm Out}(G_0(F^s)))=\Hom(\hZ'(1), {\rm Out}(G_0)).$$
We claim this map is a bijection. Indeed, since $F$ is a $C_{1}$-field, the above map is injective; it is also surjective because one has a section  ${\rm Out}(G_0)\to {\rm Aut}(G_0)$ given by pinned automorphisms of $G_0$ with respect to some pinning. 

In particular, $G$ is quasi-split. We note that even if one is only interested in split groups, we will encounter in the polar partition twisted Levi subgroups that are generally nonsplit.

Let $\frg=\Lie G$.




\sss{Abstract Weyl group scheme} Let $(\WW,\SS)$ be the {\em abstract Weyl group scheme} of $G$ defined as follows. It is a finite \'etale group scheme over $F$. Its $F^{s}$-points $\WW(F^{s})$ is the set of $G_{F^{s}}$-orbits on $\fl_{G, F^{s}}\times \fl_{G, F^{s}}$ where $\fl_G$ is the flag variety of $G$. The action of $\Gal(F^{s}/F)$ on $\WW(F^{s})$ factors through the tame quotient $\hZ'(1)$.  

We have a length function 
\begin{equation*}
\ell: \WW\to \ZZ_{\ge0}
\end{equation*}
defined on $\WW(F^{s})$ by $\ell(w)=\dim O(w)-\dim G$, where $O(w)\subset \fl_{G}\times \fl_{G}$ is the $G_{F^{s}}$-orbit indexed by $w\in \WW(F^{s})$.
Let $\SS=\ell^{-1}(1)\subset \WW$ be the subscheme of length $1$. Then $(\WW(F^{s}),\SS(F^{s}))$ has a canonical structure of a Coxeter group. The action of $\hZ'(1)$ on $\WW(F^{s})$ permutes $\SS(F^{s})$. Altogether we get a {\em Coxeter group scheme} $(\WW,\SS)$ over $F$ attached to $G$.

We make the following assumption throughout the paper:
\begin{equation}\label{char ass}
    \mbox{The characteristic $p$ of $k$ is coprime to $|\WW|$.}
\end{equation}



\sss{Abstract Cartan} The reductive quotients of all Borel $F$-subgroups of $G$ are canonically identified. This gives an $F$-torus $\TT$ that is the {\em abstract Cartan of $G$}. There is a canonical action of $\WW$ on $\TT$. For $w\in \WW$, and $(B_{1}, B_{2})\in O(w)$, the image of $B_{1}\cap B_{2}\to \TT\times \TT$ (projections to the reductive quotients of $B_{1}$ and $B_{2}$) is the graph of the $w$-action on $\TT$.

Let $(T\subset B)$ be a maximal torus contained in a Borel subgroup of $G$, both defined over $F$. The choice of $B$ induces an isomorphism $\io_{T,B}: T\isom \TT$, which also induces an isomorphism between the Weyl group schemes $W(G,T)\isom \WW$, compatible with their actions on $T$ and $\WW$. 

\sss{Abstract based root datum} 
For a pair $(T\subset B)$ defined over $F$,  the roots $\Phi(G_{F_\infty},T_{F_\infty})$ with the action of $\Gal(F_\infty/F)$ can be viewed as a finite \'etale $F$-scheme $\un\Phi(G,T)$, with a subscheme $\un\D=\un\D(G,B,T)$ of simple roots. Via the isomorphism $\io_{T,B}$ they give rise to a based root scheme $\un\D\subset \un\Phi\subset\uHom_{F}(\TT, \Gm)$ of the abstract Cartan $\TT$. The pair $(\un\Phi,\un\D)$ is independent of the choice of $(T\subset B)$.

Sending $\a\in \D$ to its reflection $s_{\a}\in \WW$ gives an isomorphism of $F$-schemes $\un\D\cong \SS$.

There is a canonical map $\un\Phi\to \uHom_{F}(\Gm, \TT)$ sending $\a\mt \a^{\v}$ for $\a\in \un\Phi(F^{s})$. Denote the image of this map by $\un\Phi^{\v}$. This is the abstract coroot scheme. Taking the image of $\un\D$ we get the abstract simple coroot scheme $\un\D^{\v}$.



\sss{The Chevalley base}\label{ss:c}

Let $\frc=\frg\sslash G$, an affine scheme over $F$. Let $\chi: \frg\to \frc$ be the natural map. It is easy to see that  $\frc$ carries a canonical integral model $\un\frc$ over $\cO_{F}$. Indeed, let $E/F$ be a finite tame extension that splits $G$, so that $G_E\cong G_{0,E}$ where $G_0$ is a connected reductive group over $k$. The descent datum for $G$ gives a homomorphism
\begin{equation*}
\r_{G,\Out}: \Gal(E/F)\to \Out(G_0).
\end{equation*} 
Let $\frg_0=\Lie G_0$, and denote by $\frc_{0}=\frg_0\sslash G_0$ over $k$. Since $\Out(G_0)$ acts on $\frc_0$, we get an action of $\Gal(E/F)$ on $\frc_0$ via $\r_{G,\Out}$. Then define
\begin{equation*}
\un\frc=\left(R_{\cO_{E}/\cO_{F}}\frc_{0,\cO_{E}}\right)^{\Gal(E/F)}.
\end{equation*}


Consider the fixed points
\begin{equation*}
\frc_{0}^{\flat}:=\frc_{0}^{\Gal(E/F)}
\end{equation*}
From the construction we have a canonical surjection from the special fiber of $\un\frc$
\begin{equation}\label{c red quot}
\un\frc\times_{\Spec \cO_{F}}\Spec k\to \frc_{0}^{\flat}
\end{equation}
which may not be an isomorphism.  Indeed, the special fiber of $\un\frc$ has dimension equal to the rank of $G$, while $\frc_{0}^{\flat}$ has dimension equal to the split rank of $G$.

\sss{Dual Chevalley base}

Let 
\begin{equation*}
    \frg^*=\Hom_F(\frg, \om(F))
\end{equation*}
be the dual Lie algebra of $F$. Using the $F$-basis $dt$ of $\om(F)$ we can identify $\frg^{*}$ with the $F$-linear dual to $\frt$. The residue pairing gives a non-degenerate $k$-linear pairing
\begin{equation}\label{res pairing g}
\res\j{\cdot,\cdot}: \frg\times \frg^{*}\to k.
\end{equation}

Let $\frc^{*}=\frg^{*}\sslash G$. Since $\Out(G_0)$ also acts on $\frc^{*}_0=\frg_0^*\sslash G_0$, the same constructions of $\un\frc$ and $\frc_{0}^{\flat}$ gives a canonical model $\un\frc^{*}$ of $\frc^{*}$ over $\cO_{F}$, an affine $k$-scheme $\frc_{0}^{*,\flat}$ and a surjective map
\begin{equation}\label{c* red quot}
\un\frc^{*}\times_{\Spec \cO_{F}}\Spec k\to\frc_{0}^{*,\flat}. 
\end{equation}

\sss{Tori}\label{ss:tori}

Let $T\subset G$ be a torus over $F$. The $F$-torus $T$ has a unique parahoric group scheme, whose arc space gives the unique parahoric subgroup $\bT\subset LT$. Let $\bT^+$ be the pro-unipotent radical of $\bT$. Their Lie algebras define two $\cO_F$-lattices in $\frt$
\begin{equation*}
\frt_c:=\Lie \bT, \quad \frt_{tn}=\Lie \bT^+.
\end{equation*}


%
%

In the dual Lie algebra
\begin{equation*}
\frt^{*}:=\Hom_{F}(\frt, \om(F)),
\end{equation*}
using the residue pairing \eqref{res pairing g} for $\frg=\frt$, we define the annihilators of $\frt_c$ and $\frt_{tn}$ to be
\begin{equation*}
\frt^*_c:= (\frt_{tn})^{\bot}, \quad \frt^*_{tn}=\frt_c^{\bot}.
\end{equation*}
These are both $\cO_F$-lattices in $\frt^*$.
Via the residue pairing, the torsion $\cO_{F}$-module $\frt^{*}/\frt^*_{tn}$ is identified with the continuous $k$-linear dual of $\frt_c$, under the $t$-adic topology.

\begin{exam}\label{ex:split torus} Suppose $T$ is a split torus, so that $T=T_{0}\times_{\Spec k}\Spec F$ for a torus $T_{0}$ over $k$ with Lie algebra $\frt_{0}$ over $k$. Then under the identifications $\frt=\frt_{0}\ot_{k}F$ and $\frt^{*}=\frt^{*}_{0}\ot_{k}\om_{F}$, we have
\begin{eqnarray*}
\frt_c=\frt_{0}\ot_{k}\cO_{F},\\
\frt_{tn}=\frt_{0}\ot_{k}\fm_{F},\\
\frt^*_c=\frt^{*}_{0}\ot_{k}\cO_{F}\frac{dt}{t},\\
\frt^*_{tn}=\frt^{*}_{0}\ot_{k}\cO_{F}dt,\\
\label{split torus F/O}\frt^{*}/\frt^*_{tn}=\frt^{*}_{0}\ot_{k}(\om(F)/\om(\cO_{F})).
\end{eqnarray*}
\end{exam}

The above discussions apply to any finite extension $E/F$ and to the base change $T_E$. We write $\frt(E)$ for the Lie algebra of $T_E$, inside which we have $\cO_E$-lattices
\begin{equation*}
    \frt(E)_{tn}\subset \frt(E)_c.
\end{equation*}
Dually, let $\frt^*(E)=\Hom_E(\frt(E), \om(E))$, inside which we have $\cO_E$-lattices
\begin{equation*}
    \frt^*(E)_{tn}\subset \frt^*(E)_c.
\end{equation*}





\sss{Maximal tori in $G$}\label{sss: max tori}
The maximal tori in $G$ up to conjugacy are classified by the Galois cohomology group $\upH^{1}(\Gal(F^s/F),\WW(F^s))$. With the assumption \eqref{char ass} on the characteristic of $k$, all maximal tori of $G$ split over a tame extension of $F$, and their conjugacy classes are classified by  $\upH^{1}(\Gal(F_\infty/F),\WW(F_\infty))$, with the base point corresponding to any maximal torus contained in a Borel subgroup of $G$ (such a maximal torus is isomorphic to the abstract Cartan $\TT$).

Now suppose a maximal torus $T\subset G$ corresponds to the class of a cocycle  $w: \hZ'(1)\to \WW(F_{\infty})$. Choose a finite tame extension $E/F$ such that $T_{E}$ is split, then $w$ factors through $\Gal(E/F)\cong \mu_{m}$ where $m=[E:F]$. Then
\begin{equation*}
    T(F)=\{x\in \TT(E)|\z(x)=w(\z)x, \forall \z\in \mu_m\}.
\end{equation*}
Here $\z(x)$ refers to the action of $\Gal(F_{\infty}/F)$ on $\TT(F_{\infty})$ coming from its $F$-form $\TT$.  

\begin{exam}\label{ex:max tori in split G}
    Suppose $G$ is split over $F$, then $\TT$ is split. Write $\frt_0=\xcoch(\TT)\ot k$, with the natural action of $\WW$. Let $T\subset G$ be a maximal torus corresponds to the class of a cocycle, now a homomorphism,  $w: \hZ'(1)\to \WW=\WW(F_{\infty})$. It uniquely factors through an injection $\ov w: \mu_m\incl \WW$ for some $m$, via which $\mu_m$ acts on $\frt_0$. Let $E=k\lr{t_E}$ with $t_E=t^{1/m}$. Then 
\begin{equation*}
\frt^{*}=\frt^{*}(E)^{\mu_{m}}=(\frt^{*}_{0}\ot_{k} \om(E))^{\mu_{m}}, \quad \frt^*_{tn}=(\frt^{*}_{0}\ot_{k} \om(\cO_{E}))^{\mu_{m}}.
\end{equation*}
Here $\mu_m$ acts diagonally on both $\frt_{0}^{*}$ and $\om(E)=Edt$. 

    Decompose $\frt^*_0$ into weight spaces under the $\mu_m$-action via $w$:
\begin{equation}\label{t0 eigen}
    \frt^*_0=\bigoplus_{i\in \ZZ/m\ZZ}\frt^*_0(i).
\end{equation}
Then $\frt^{*}$ is the $t$-adic completion of
\begin{equation*}
 \bigoplus_{i\in \ZZ}\frt_{0}^{*}(i)\frac{d\t}{\t^{i+1}}.
\end{equation*}
We also have
\begin{equation*}
\frt^*_{tn}=\prod_{i\in \ZZ_{\ge0}}\frt_{0}^{*}(-i-1)\t^{i}d\t.
\end{equation*}
Therefore we have a canonical isomorphism
\begin{equation}\label{colattice}
\frt^{*}/\frt^*_{tn}\cong \bigoplus_{i\in \ZZ_{\ge0}}\frt_{0}^{*}(i)\frac{d\t}{\t^{i+1}}.
\end{equation}
\end{exam}




\sss{Root system as a scheme}\label{sss:rs}
Let $T\subset G$ be a maximal torus. The root system $\Phi(G_{F_{\infty}},T_{F_\infty})$ carries a continuous action of $\Gal(F_\infty/F)$, thus can be viewed as a finite \'etale $F$-scheme that we denote by $\un\Phi(G,T)$. The Weyl group scheme $W(G,T)=N_{G}(T)/T$ (as a finite \'etale group scheme over $F$) acts on $\un\Phi(G,T)$.

Similarly we define a finite \'etale $F$-scheme of coroots the $\un\Phi^\v(G,T)$ with a $W(G,T)$-action.

\subsection{Loop group and loop Lie algebra}\label{sss:loop Lie}

For an affine $F$-scheme $X$, let $LX$ be its loop space as an ind-scheme over $k$, i.e., $LX(R)=X(R\lr{t})$ for any $k$-algebra $R$. For an $\cO_F$-scheme $Y$, let $L^+Y$ be its arc space as a scheme over $k$, i.e., $L^+Y(R)=Y(R\tl{t})$ for any $k$-algebra $R$.

We can therefore form the loop group $LG$, and the loop Lie algebra $L\frg$, the latter being an increasing union of infinite-dimensional affine spaces over $k$.

We still denote by $\chi$ (resp. $\chi^{*}$) the loop functor applied to $\chi:\frg\to \frc$ and $\chi^{*}:\frg^{*}\to \frc^{*}$
\begin{equation*}
\chi: L\frg\to L\frc, \quad \chi^{*}: L\frg^{*}\to L\frc^{*}.
\end{equation*}

\sss{Parahoric subgroups}
Bruhat and Tits defined a class of $\cO_F$-models of $G$ called {\em parahoric group schemes}. A parahoric subgroups $\bP\subset LG$ is of the form $L^+\cG$, where $\cG$ is a parahoric group scheme of $G$ over $\cO_F$. The Lie algebra of a parahoric subgroup $\bP\subset LG$ is a parahoric subalgebra of $L\frg$. A minimal parahoric subgroup of $LG$ is called an Iwahori subgroup, whose Lie algebra is called an Iwahori subalgebra of $L\frg$. 

\sss{Compact and topologically nilpotent loci}
Using the canonical $\cO_{F}$-model $\un\frc$ of $\frc$ as constructed in \S\ref{ss:c}, we define the closed subscheme
\begin{equation*}
(L\frc)_{c}:=L^{+}\un\frc\subset L\frc.
\end{equation*}
The subscript $(-)_c$ stands for compact. Alternatively, $(L\frc)_{c}$ is the image under $\chi$ of any Iwahori subalgebra $\frI\subset L\frg$. 

Let 
\begin{equation*}
(L\frc)_{tn}\subset (L\frc)_{c}
\end{equation*}
be the fiber over $0$ of the evaluation map to the special fiber of $\un\frc$ composed with the natural map \eqref{c red quot}
\begin{equation}\label{Lc red quot}
\e_{\frc}: (L\frc)_{c}=L^{+}\un\frc\to \un\frc\times_{\Spec \cO_{F}}\Spec k\xr{\eqref{c red quot}}\frc_{0}^{\flat}.
\end{equation}
Here the subscript $(-)_{tn}$ stands for {\em topologically nilpotent}. Alternatively, $(L\frc)_{tn}$ is  the image under $\chi$ of $\frI^{+}$, where $\frI^{+}\subset \frI$ is the pro-nilpotent radical of any Iwahori subalgebra $\frI$.

Let $(L\frg)_{tn}\subset (L\frg)_{c}\subset L\frg$ be the preimages of $(L\frc)_{tn}$ and $(L\frc)_{c}$ respectively. The $k$-points of $(L\frg)_{tn}$ (resp. $(L\frg)_{c}$) are exactly the topologically nilpotent (resp. compact) elements of $\frg(F)$. 

Similarly, using the canonical $\cO_{F}$-model $\un\frc^{*}$ of $\frc^{*}$ and the map \eqref{c* red quot}, we similarly define closed subschemes 
\begin{equation*}
(L\frc^{*})_{tn}\subset (L\frc^{*})_{c}\subset L\frc^{*}.
\end{equation*}
Alternatively, $(L\frc^{*})_{tn}$ (resp. $(L\frc^{*})_{c}$) is the image of $(\Lie\bI)^{\bot}$ (resp. $(\Lie\bI^{+})^{\bot}$) for any Iwahori subgoup $\bI\subset LG$. 

Let $(L\frg^{*})_{tn}$ (resp. $(L\frg^{*})_{c}$) be the preimage of $(L\frc^{*})_{tn}$ (resp. $(L\frc^{*})_{c}$) under $\chi^{*}: L\frg^{*}\to L\frc^{*}$. Then $(L\frg^{*})_{tn}$ is the union of $\frI^{\bot}$ for all Iwahori subalgebras $\frI$.

\sss{Remark}
    When $G=T$ is a torus, the subschemes $(L\frt^{*})_{tn}$ and $(L\frt^*)_c$ are the arc spaces of the lattices $\frt^*_{tn}$ and $\frt^*_c$ (as affine spaces over $\cO_{F}$) respectively.



\subsection{Toral polar data}
Let $T\subset G$ be a maximal torus and let $E/F$ be a finite tame  extension that splits $T$. For any coroot $\a^{\v}\in \Phi^{\v}(G_{E}, T_{E})$, its differential $d\a^{\v}$ gives an element in $\frt(E)_c$. Pairing with $d\a^\v$ induces an $\cO_{E}$-linear map 

\begin{equation*}
d\a^{\v}: \frt^{*}(E)/\frt^*(E)_{tn}\to \om(E)/\om(\cO_{E}).
\end{equation*}

\begin{defn} Let $T\subset G$ be a maximal torus. An element $\l\in \frt^{*}/\frt^*_{tn}$ is called {\em $G$-regular} if for some finite tame extension $E/F$ that splits $T$, and 
any $\a^{\v}\in \Phi^{\v}(G_{E}, T_{E})$, we have $d\a^{\v}(\l)\ne 0\in \om(E)/\om(\cO_{E})$.
\end{defn}
It is easy to check that the notion of $G$-regularity is independent of the choice of the splitting field $E$ of $T$.

\begin{defn} A {\em toral polar datum} for $G$ is a pair $(T,\l)$, where $T$ is a maximal $F$-torus of $G$ with Lie algebra $\frt$, and $\l\in \frt^{*}/\frt^*_{tn}$ is $G$-regular. 
\end{defn}
There is an obvious action of $G(F)$ on the set of toral polar data by conjugating $T$ and the co-adjoint action on $\l$.

\begin{exam}[Split toral polar data] Suppose $T=T_{0}\times_{\Spec k}\Spec F$ is split. Under the isomorphism \eqref{split torus F/O}, $\l\in \frt^{*}/\frt^*_{tn}$ has a unique representative as a Laurent tail
\begin{equation*}
\l=(\l_{-n}t^{-n}+\l_{-n+1}t^{-n+1}+\cdots+\l_{0})\frac{dt}{t}
\end{equation*}
where $\l_{i}\in \frt^{*}_{0}$, and $n\in \ZZ_{\ge0}$. If $\l_{-n}\ne0$, then $\l$ is said to have depth $n$. It is $G$-regular if and only if for each $\a^{\v}\in \Phi^{\v}(G,T)$, viewed as a cocharacter of $T_{0}$, $\j{d\a^{\v},\l_{i}}\ne0$ for some $i\in\{-n,\ldots, 0\}$.
\end{exam}

\begin{exam}[Epipelagic polar data]\label{ex:epi}
Assume for simplicity $G$ is split. Let $T\subset G$ be a maximal torus corresponding to a homomorphism $w: \hZ'(1)\to \WW$ up to conjugacy. We use notations from Example \ref{ex:max tori in split G}, in particular, $w$ factors through $\ov w: \mu_m\incl \WW$.
We call $\ov w$  {\em regular} if the eigenspace $\frt^*_0(1)$ (see \eqref{t0 eigen}, where $\frt_0=\xcoch(\TT)\ot k$) contains a vector $v$ that is not annihilated by the differential of any coroot of $\TT$. By our assumption on $\ch(k)$, this notion is equivalent to saying that $\ov w(\z)$ is a regular element of order $m$ in $\WW$ in the sense of Springer \cite{Spr} for some/any primitive $\z\in \mu_m(k)$.
Under the isomorphism \eqref{colattice}, we take 
\begin{equation*}
\l=v\frac{d\t}{\t^{2}}\in \frt^{*}/\frt^*_{tn}.
\end{equation*}
Then $\l$ is $G$-regular since $v$ is regular semisimple. We call the toral polar datum $(T,\l)$ thus defined an {\em epipelagic polar datum}, due to their close relationship with the epipelagic representations of $p$-adic groups defined by Reeder and Yu \cite{RY}.

  
\end{exam}

\begin{exam}[Homogeneous polar data]\label{ex:homo} Continuing with the setup in Example \ref{ex:epi}, but more generally take any $i\ge1$ that is coprime to $m$, and let 
\begin{equation*}
\l=v \frac{d\t}{\t^{i+1}}
\end{equation*}
where $v\in \frt_{0}^{*}(i)$ is not annihilated by any coroot. Then $(T,\l)$ is again a toral polar datum. We call such $(T,\l)$ {\em homogeneous polar data}, and they are closely related to the Hitchin moduli spaces attached to homogeneous affine Springer fibers studied in \cite{BBAMY}.
\end{exam}

\begin{remark}
Toral polar data $(T,\l)$ for a fixed maximal torus $T$ of $G$ can be thought of a kind of Langlands parameters valued in $T^\vee$, the dual torus of $T$.   

Indeed, when $T$ is split, $\l$ can be viewed as the polar part of a $T^\vee$-connection on the formal disk $\Spec F$. Thus $\l$ can be viewed as part of the de Rham version of the Langlands parameter for $T$. For a general torus $T$ that splits over a finite tame extension $E/F$, we can view $\l$ as the polar part of a $\Gal(E/F)$-equivariant $T^\vee$-connection on the covering disk $\Spec E$.

To relate $\l$ to the usual Langlands parameters for tori, we temporarily work with a finite residue field $k=\FF_q$ (and $F=k\lr{t}$ as usual). We assume $T=T_0\times_{\Spec k}\Spec F$ is split. The datum of $\l\in \frt^*/\frt^*_{tn}$ is the same as a continuous $k$-linear map $\l: \frt_0(\cO_F)\to k$. Composing with a fixed  nontrivial character $\psi: k\to \Qlbar^\times$, we get a continuous homomorphism $\psi_\l: \frt_0(\cO_F)\to \Qlbar^\times$. Let $\frt_0(\cO_F)_{\ge n}=\ker(\frt_0(\cO_F)\to \frt_0(\cO_F/(t^n)))$, $T(\cO_F)_{\ge n}=\ker(T_0(\cO_F)\to T_0(\cO_F/(t^n)))$. We can use the logarithm map to identify $\frt_0(\cO_F)_{\ge 1}/\frt_0(\cO_F)_{\ge p}$ and the $T(\cO_F)_{\ge 1}/T(\cO_F)_{\ge p}$. Thus when the order of pole of $\l$ is $<p$, $\l: \frt_0(\cO_F)\to k$ is trivial on $\frt_0(\cO_F)_{\ge p}$, thus $\l|_{\frt_0(\cO_F)_{\ge1}}$ gives rise to a continuous homomorphism $\l': T(\cO_F)_{\ge 1}\surj T(\cO_F)_{\ge 1}/T(\cO_F)_{\ge p}\to k$. Composing with $\psi$ we get a continuous homomorphism $\phi_{\l}: T(\cO_F)_{\ge 1}\to \Qlbar^\times$. By class field theory we can turn $\phi_{\l}$ into a continuous homomorphism $\rho_\l: P_F\to T^\vee(\Qlbar)$, where $P_F\subset \Gal(F^s/F)$ is the wild inertia. To summarize, when $T$ is split and the order of pole of $\l$ is less than $p$, the irregular polar part of $\l$ is the same datum as a wild Langlands parameter for $T$ in the usual sense.
\end{remark}

Given a toral polar datum $(T,\l)$, let
\begin{equation*}
Y_{(T,\l)}\subset L\frc^{*}
\end{equation*}
be the image of the natural map
\begin{equation*}
\chi^{*}: \l+ (L\frt^{*})_{tn}\to L\frc^{*}. 
\end{equation*}
Here $\l+ (L\frt^{*})_{tn}$ is the $\l$-translation of the closed subscheme $(L\frt^{*})_{tn}$ of the ind-scheme $L\frt^{*}$. In fact  $Y_{(T,\l)}$ can be equipped with a natural structure of a reduced finitely-presented closed subscheme of $L\frc^*$. This will be stated more generally as Lemma \ref{l:Y scheme}, whose proof will be postponed to the sequel.

It is clear from the construction that $Y_{(T,\l)}$ depends only on the $G(F)$-orbit of $(T,\l)$.

\begin{defn} A {\em toral polar characteristic subset}  is a reduced closed subscheme of $L\frc^{*}$ of the form $Y_{(T,\l)}$ for some toral polar datum $(T,\l)$. A {\em toral polar subset of $L\frg^{*}$} is the preimage of a toral polar characteristic subset under the map $\chi^{*}$. We denote 
\begin{equation*}
Z_{(T,\l)}=\chi^{*,-1}(Y_{(T,\l)}).
\end{equation*}
Again $Z_{(T,\l)}$ depends only on the $G(F)$-orbit of $(T,\l)$.
\end{defn}

\subsection{Twisted Levi subgroups}

A {\em twisted Levi subgroup} of $G$ is the centralizer of a torus $A\subset G$; equivalently, $M$ is a subgroup of $G$ that becomes a Levi subgroup of $G$ over $F^s$. Under our assumption \eqref{char ass} on the characteristic of $k$, all twisted Levi subgroups of $G$ will split over a tame extension of $F$ (because any maxima torus $T$ of $M$ is also a maximal torus of $G$, hence splits over a tame extension of $F$, see \S\ref{sss: max tori}). Moreover, the assumption \eqref{char ass} also holds for $M$, because $|\WW_M|$ divides $|\WW|$.

Let $M\subset G$ be a twisted Levi subgroup with Lie algebra $\fm$. Let $A_{M}$ be the maximal torus in the center of $M$. Let $T_{M}=M/M_{\der}$ be the abelianization of $M$, then the natural map $A_{M}\to T_{M}$ is an isogeny of $F$-tori. 


\sss{Relative roots and coroots}\label{sss:rel coroots}
Suppose $M$ is a Levi subgroup of $G$, i.e., $A_M$ and $T_M$ are split tori. We have the set of relative roots $\Phi(G,A_M)\subset \xch(A_M)$ of $G$ with respect to $A_M$: the  weights  of the adjoint action of $A_M$ on $\frg/\fm$. 

Similarly, we can define relative coroots $\Phi^\v(G,T_M)\subset \xcoch(T_M)$ as follows. Choose any maximal torus $T\subset M$. We have a canonical map $T\to T_M$. We define $\Phi^\v(G,T_M)$ to be the nonzero elements in the image of $\Phi^\v(G_{F_\infty},T_{F_\infty})\subset \xcoch(T_{F_\infty})$ under the canonical projection $\xcoch(T_{F_\infty})\to \xcoch(T_{M,F_\infty})=\xcoch(T_M)$.  It is easy to check that the resulting subset $\Phi^\v(G,T_M)\subset \xcoch(T_M)$ is independent of the choices of $T$.

More generally, if $M$ is a twisted Levi subgroup of $G$, so $A_M$ and $T_M$ may not split, the sets $\Phi(G_{F_\infty},A_{M,F_\infty})$ and $\Phi^\v(G_{F_\infty},T_{M,F_\infty})$ are defined by the above procedure, and they carry continuous actions of $\Gal(F_\infty/F)$. Therefore they define finite \'etale $F$-schemes $\un\Phi(G,A_M)$ and $\un\Phi^\v(G,T_M)$.


\sss{Twisted Levi and sub root systems}

Let $T$ be a maximal torus of $G$. Recall the finite \'etale $F$-scheme of roots $\un\Phi(G,T)$ introduced in \S\ref{sss:rs}. A $\QQ$-closed subscheme $\un\Phi'\subset \un\Phi(G,T)$ is a subscheme whose $F_\infty$-points forms a $\QQ$-closed subsystem of $\Phi(G_{F_\infty}, T_{F_\infty})$ in the usual sense.

For any twisted Levi $M\subset G$ containing $T$, we get a $\QQ$-closed subscheme $\un\Phi(M,T)\subset \un\Phi(G,T)$ of roots and a $\QQ$-closed subscheme $\un\Phi^\v(M,T)\subset \un\Phi^\v(G,T)$ of coroots. This construction gives an order-preserving bijection
\begin{eqnarray*}
\{\mbox{Twisted Levi $M\subset G$ such that $T\subset M$}\}&\bij& \{\mbox{$\QQ$-closed subschemes of $\un\Phi(G,T)$}\}\\
&\bij& \{\mbox{$\QQ$-closed subschemes of $\un\Phi^\v(G,T)$}\}.
\end{eqnarray*}

More generally, starting with a twisted Levi subgroup $M$ of $G$, any twisted Levi $M'$ containing $M$ gives a $\QQ$-closed subscheme $\un\Phi(M',A_{M})\subset \un\Phi(G,A_M)$ of relative roots and a $\QQ$-closed subscheme $\un\Phi^\v(M',T_{M})\subset \un\Phi^\v(G,T_M)$ of relative coroots. This construction gives an order-preserving bijection
\begin{eqnarray}\label{tw Levi RS}
\{\mbox{Twisted Levi $M'\subset G$ such that $M\subset M'$}\}&\bij& \{\mbox{$\QQ$-closed subschemes of $\un\Phi(G,A_M)$}\}\\
\notag &\bij&  \{\mbox{$\QQ$-closed subschemes of $\un\Phi^\v(G,T_M)$}\}.
\end{eqnarray}

\sss{Complement} Given a twisted Levi subgroup $M\subset G$, we may identify $\fm^*$ as a subspace of $\frg^*$ canonically: it is the fixed part of the co-adjoint action of $A_M$ on $\frg^*$. We therefore have a decomposition
\begin{equation}\label{decomp g*}
    \frg^*=\fm^*\op (\frg/\fm)^*.
\end{equation}
Dualizing we get a canonical decomposition
\begin{equation}\label{decomp g}
    \frg=\fm\op (\frg/\fm).
\end{equation}

\subsection{General polar data}

Let $M\subset G$ be a twisted Levi subgroup with $T_{M}=M/M_{\der}$. We may apply the discussion of \S\ref{ss:tori} to $T_{M}$ and consider the torsion $\cO_{F}$-module 
\begin{equation*}
\frt_{M}^{*}/\frt^*_{M,tn}.
\end{equation*}
Using \eqref{decomp g*}, we have a canonical embedding
\begin{equation*}
\frt^{*}_{M}\incl \fm^*\incl \frg^{*}.
\end{equation*}

Let $E/F$ be a finite tame extension that splits $T_M$. For a relative coroot $\a^{\v}\in \Phi^{\v}(G_{E},T_{M,E})$ as defined in \S\ref{sss:rel coroots}, its differential $d\a^\v$ gives an element in $\frt_{M}(\cO_{E})$. Pairing with $d\a^{\v}$ gives an $\cO_{E}$-linear map
\begin{equation*}
d\a^{\v}: \frt^{*}_{M}(E)/\frt^*_{M}(E)_{tn}\to \om(E)/\om(\cO_{E}).
\end{equation*}

\begin{defn} An element $\l\in \frt_{M}^{*}/\frt^*_{M,tn}$ is called {\em $G$-regular} if for some finite tame splitting field $E/F$ of $T_M$, and any relative coroot $\a^{\v}\in \Phi^{\v}(G_E,T_{M,E})$, $d\a^{\v}(\l)\in \om(E)/\om(\cO_{E})$ is nonzero.
\end{defn}
This notion is independent of the splitting field $E$.

The following is the key definition.

\begin{defn} A {\em polar datum} for $G$ is a pair $(M,\l)$ where $M$ is a twisted Levi subgroup of $G$ and $\l\in \frt_{M}^{*}/\frt^*_{M,tn}$ is $G$-regular. We denote by $\PD(G)$  the set of all polar data for $G$.
\end{defn}

The group $G(F)$ acts on $PD(G)$ by conjugation. The stabilizer $\Stab_G(M,\l)$ of a polar datum has a natural structure of an algebraic group over $F$.

\begin{lemma}\label{l:stab polar data}
    The stabilizer $\Stab_G(M,\l)$ of any polar datum $(M,\l)$ is equal to $M$ as an algebraic group over $F$.
\end{lemma}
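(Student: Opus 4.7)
The plan is to establish the two inclusions $M\subseteq \Stab_G(M,\l)$ and $\Stab_G(M,\l)\subseteq M$; the latter is what requires $G$-regularity. The first is immediate: any $m\in M$ normalizes $M$, and since $T_M=M/M_\der$ is abelian, $M$ acts trivially on $T_M$, hence on $\frt_M$ and on the quotient $\frt^{*}_{M}/\frt^{*}_{M,tn}$. So every element of $M$ fixes $\l$.

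For the reverse inclusion, any element stabilizing $(M,\l)$ normalizes $M$, hence lies in $N_G(M)$. Since $M=Z_G(A_M)$, one has $N_G(M)=N_G(A_M)$, and the quotient $N_G(M)/M$ is canonically the finite \'etale $F$-group scheme $W(G,A_M)$, the relative Weyl group. To conclude, it suffices to show that any $w\in W(G,A_M)(F^s)$ fixing $\l$ in $\frt^{*}_{M}/\frt^{*}_{M,tn}$ is trivial. I would choose a finite tame extension $E/F$ splitting $T_M$, so that $w\in W(G,A_M)(E)$; the natural map $\frt^{*}_{M}/\frt^{*}_{M,tn}\incl \frt^{*}_M(E)/\frt^{*}_M(E)_{tn}$ is $W(G,A_M)$-equivariant and injective. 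Picking a splitting $T_{M,E}\cong T_{M,0}\times_{\Spec k}\Spec E$ as in Example \ref{ex:max tori in split G}, the element $\l$ is represented by a Laurent tail
$$\l \;=\; \sum_{i=0}^{n}\l_i\,\frac{dt_E}{t_E^{i+1}},\qquad \l_i\in\frt^{*}_{M,0},$$
on which the Weyl group acts $k$-linearly and coefficient-wise. Hence $w\l=\l$ in the quotient is equivalent to $w\l_i=\l_i$ for every $i$.

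The heart of the argument is the final step. The relative Weyl group $W(G,A_M)$ acts on $\frt_{M,0}^*$ as a finite reflection group, with reflecting hyperplanes $\{\mu:\j{\mu,d\a^\v}=0\}$ indexed by the relative coroots $\a^\v\in \un\Phi^\v(G,T_M)(E)$. Under the characteristic assumption \eqref{char ass} that $p\nmid |\WW|$, Steinberg's theorem on pointwise stabilizers in finite reflection groups applies and says that the common stabilizer of the tuple $(\l_i)_i$ is generated by those reflections $s_{\a^\v}$ with $\j{\l_i,d\a^\v}=0$ for every $i$. But $G$-regularity of $\l$ says precisely that for every relative coroot $\a^\v$ there exists some $i$ with $\j{\l_i,d\a^\v}\ne 0$ in $\om(E)/\om(\cO_E)$; so no nontrivial reflection fixes all the $\l_i$, the stabilizer is trivial, and $w=1$. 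The main obstacle is this last step: one must verify carefully that $W(G,A_M)$ does act as a reflection group on $\frt^{*}_{M,0}$ with the expected reflecting hyperplanes (so that the relative coroots appearing in Steinberg match those governing $G$-regularity), and that Steinberg's theorem is indeed available in our positive characteristic setting under \eqref{char ass}.
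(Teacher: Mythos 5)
Your proof correctly reduces to studying the stabilizer of $\l$ inside the relative Weyl group $W(G,A_M)=N_G(M)/M$, but the key claim you flag as needing verification --- that $W(G,A_M)$ acts on $\frt^*_{M,0}$ as a finite reflection group with reflecting hyperplanes given by relative coroots --- is in fact false in general, so Steinberg's theorem cannot be applied in this form. Concretely, take $G$ of type $D_4$ and $M$ the Levi of type $A_2$ generated by two adjacent simple roots, say $\a_1=e_1-e_2$ and $\a_2=e_2-e_3$ (with $\a_2$ the branch node). Then $W_M\cong S_3$, the fixed space $(\frt_0^*)^{W_M}=\{(a,a,a,b)\}$ is $2$-dimensional, and $N_W(W_M)/W_M\cong\ZZ/2$ is generated by the image of $-1\in W(D_4)$, which acts on this $2$-dimensional space by $-\id$. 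This is not a pseudo-reflection, so $W(G,A_M)$ is not a reflection group here. In this example your conclusion happens to hold anyway (since $-\id$ fixes no nonzero vector), but the argument as written does not establish it, and in principle a non-reflection element could fix a $G$-regular tail.

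The paper sidesteps this issue by never leaving the absolute Weyl group $W=W(G,T)$. Writing $\l$ as a Laurent tail $\wt\l=\sum_{n>0}\l_n t^{-n}dt$ with coefficients $\l_n\in\frt_0^*$, it identifies $W(M',T)$ with the pointwise stabilizer $W_{\wt\l}\subset W$, invokes \cite[Lemma 3.3.6]{JY} for the fact that under \eqref{char ass} this stabilizer is a parabolic subgroup of $W$ (hence generated by its reflections), and observes that by $G$-regularity the reflections in $W_{\wt\l}$ are exactly those of $W(M,T)$. To repair your argument you should do the same: lift any $w\in W(G,A_M)$ fixing $\l$ to $\wt w\in N_W(W_M)\subset W$, note $\wt w\in W_{\wt\l}$, and use that $W_{\wt\l}$ is parabolic in $W$. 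The Steinberg-type statement is a theorem for the absolute Weyl group, but can fail for the relative Weyl group acting on $\frt^*_{M,0}$.
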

\begin{proof}
    Let $M'=\Stab_G(M,\l)$. It is clear that $M\subset M'\subset N_G(M)$. To show $M=M'$, we may replace $F$ by a tame extension, hence we may assume $M$ is split. Let $T\subset M$ be a split maximal torus so $T=T_0\times_kF$ for some $k$-torus $T_0$. To show $M=M'$ we only need to show their Weyl groups $W(M,T)$ and $W(M',T)$ are equal. 
    
    Let $W=W(G,T)$. We represent $\l$ by the element $\wt\l=\sum_{n>0}\l_nt^{-n}dt\in \frt^*$, where $\l_n\in \frt_0^*$. Then $W(M',T)=W_{\wt \l}$, the stabilizer of $\wt \l$ under $W$, while $W(M,T)$ is the subgroup of $W(M',T)$ generated by root reflections. It is well-known that under our assumption \eqref{char ass} on $\ch(k)$ (in fact a much weaker assumption suffices, see \cite[Lemma 3.3.6]{JY}), $W_{\wt \l}$ is a parabolic subgroup of $W$ for any $\wt \l\in \frt^*$, in particular it is generated by reflections. This shows $W_{\wt\l}=W(M,T)$ and completes the proof.
\end{proof}

Let $\un Y_{(M,\l)}\subset \frc^*(F)=L\frc^{*}(k)$ be the image of 
\begin{equation*}
\l+(L\fm^{*})_{tn}(k)\subset \fm^{*}
\end{equation*}
under the map $\chi^{*}: \fm^{*}\to \frc^{*}$. 

\begin{lemma}\label{l:Y scheme}
    There is a unique reduced fp-closed subscheme  $Y_{(M,\l)}\subset L\frc^*$ whose $k$-points are $\un Y_{(M,\l)}$.
\end{lemma}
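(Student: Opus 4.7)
The plan is to describe $Y_{(M,\l)}$ as the image of $\wt\l + (L\fm^*)_{tn}$ (for a fixed lift $\wt\l\in \frt_M^*$ of $\l$) under the Chevalley map $\chi^*$, via two stages: first push forward to $L\frc_M^*$ along $\chi_M^*\colon L\fm^*\to L\frc_M^*$, then push forward to $L\frc^*$ via the map induced by the finite morphism $\frc_M^*\to \frc^*$.

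For the first stage, I would exploit the central decomposition of the reductive Lie algebra $\fm$. Under assumption~\eqref{char ass}, $\fm$ splits as $Z(\fm)\oplus \fm_{\der}$ with $Z(\fm)=\frt_M$ canonically, giving an $M$-equivariant decomposition $\fm^* = \frt_M^*\oplus \fm_{\der}^*$ in which $M$ acts trivially on the first summand. This induces a canonical identification $\frc_M^* \cong \frt_M^*\times (\fm_{\der}^*\sslash M_{\der})$, under which $\chi_M^*$ satisfies
\[
\chi_M^*(\wt\l+x) \;=\; \chi_M^*(\wt\l)+\chi_M^*(x) \qquad \text{for all } x\in \fm^*.
\]
Passing to loop spaces and using that $\chi_M^*$ induces a surjection $(L\fm^*)_{tn}\to (L\frc_M^*)_{tn}$ on $k$-points (via a Kostant-type section, which is available under assumption~\eqref{char ass}), we obtain
\[
\wt Y \;:=\; \chi_M^*\bigl(\wt\l+(L\fm^*)_{tn}\bigr) \;=\; \chi_M^*(\wt\l) + (L\frc_M^*)_{tn},
\]
a translate of $(L\frc_M^*)_{tn}$. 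Since $(L\frc_M^*)_{tn}$ is a reduced fp-closed subscheme of $L\frc_M^*$ (cut out by the finitely many conditions defining the zero fiber of the evaluation map to $\frc_{M,0}^{*,\flat}$), so is $\wt Y$.

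For the second stage, I would show that the restriction of the loop map $L\frc_M^*\to L\frc^*$ to $\wt Y$ is a closed immersion, so that the image inherits the required structure. The crucial input is the $G$-regularity of $\l$: after passing to a tame extension $E/F$ splitting a maximal torus $T\subset M$, Lemma~\ref{l:stab polar data} gives $\Stab_{W(G,T)}(\wt\l)=W(M,T)$. Hence the finite morphism $\frc_M^*\to\frc^*$ (whose generic degree is $|W(G,T)/W(M,T)|$) is \'etale at $\chi_M^*(\wt\l)$, and remains \'etale along all of $\wt Y$ because adding a topologically nilpotent element does not alter the relevant ramification data. Moreover, the $|W(G,T)/W(M,T)|$ cosets $\chi_M^*(w\wt\l)+(L\frc_M^*)_{tn}$, for $w$ running over representatives of $W(G,T)/W(M,T)$, are pairwise disjoint in $L\frc_M^*$ by $G$-regularity, yet collapse to a single image in $L\frc^*$. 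Together these show that $\wt Y\to L\frc^*$ is injective on $k$-points and formally \'etale, hence a closed immersion, and the image $Y_{(M,\l)}$ is a reduced fp-closed subscheme with $k$-points $\un Y_{(M,\l)}$.

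Uniqueness is standard: a reduced closed subscheme of an ind-scheme over an algebraically closed field is determined by its set of $k$-points. The main obstacle I anticipate is rigorously propagating the pointwise \'etaleness of $\frc_M^*\to\frc^*$ at $\chi_M^*(\wt\l)$ to the closed-immersion property of the loop-space map on the entire topologically nilpotent coset $\wt Y$. This will require careful handling of the arc-space/ind-scheme structure (aligning $t$-adic neighborhoods with the Zariski \'etale locus of the finite map) and likely an explicit local presentation of $\frc_M^*\to\frc^*$ compatible with the lattice decomposition of $\frt_M^*$ governing the polar part of $\l$.
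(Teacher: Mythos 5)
The paper does not prove this lemma: it explicitly defers the proof to a sequel and says it relies on results from \cite{BKV}. So there is no in-paper argument to compare your attempt against; what follows is an assessment of the plan on its own merits.

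Your stage~1 (factoring $\chi^*$ through $L\frc_M^*$ and using the central splitting $\fm^*=\frt_M^*\oplus\fm_{\der}^*$ to identify $\wt Y$ with the translate $\chi_M^*(\wt\l)+(L\frc_M^*)_{tn}$) is a plausible reduction. Be aware, though, that the ``Kostant-type section'' for $\chi_M^*$ needs an $M$-invariant identification $\fm_{\der}\cong\fm_{\der}^*$ to be set up on the coadjoint side; this does exist under assumption~\eqref{char ass}, but it should be stated rather than taken for granted.

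The genuine gap is stage~2, and you correctly anticipate it. Pointwise \'etaleness of $\frc_M^*\to\frc^*$ along the $k$-points of $\wt Y$ does follow from $G$-regularity together with Lemma~\ref{l:stab polar data}, and the loop functor upgrades this to \emph{formal} \'etaleness of $L\frc_M^*\to L\frc^*$ at those points. But the jump from ``formally \'etale, ind-finite, injective on $k$-points'' to ``closed immersion of ind-schemes with \emph{fp-closed} image'' is precisely where the hard work lies: fp-closedness demands uniform control at a finite $t$-adic level, in the sense of the truncations $L^{[-N,N)}$ appearing in the paper's definition, and the polynomial map $\frc_M^*\to\frc^*$ has no obvious compatibility with that filtration. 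This is exactly the kind of assertion for which the authors invoke the placidity/finite-presentation framework of \cite{BKV}, so your ``main obstacle'' is not a technicality but the actual content. Two smaller corrections: the fiber of $\frc_M^*\to\frc^*$ over $\chi^*(\wt\l)$ is indexed by $W(M,T)\backslash W(G,T)/W(M,T)$, not $W(G,T)/W(M,T)$; and the asserted pairwise disjointness of the translates $\chi_M^*(w\wt\l)+(L\frc_M^*)_{tn}$ should not be attributed to bare $G$-regularity --- it requires the pole-order comparison that is essentially item~(2) in the proof of Theorem~\ref{th:partition c*}, and you should cite that argument explicitly rather than re-deriving it informally.
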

Here, fp means finite presentation. Concretely, at least when $G$ is split, we identify $\frc^*$ with an affine space $\AA^r_F$ so that $L\frc^*\cong (L\AA^1)^r$.  We use $L^{\ge i}\AA^1$ to denote the subscheme of $L\AA^1$ involving only powers $t^{\ge i}$, and similarly define the subquotient $L^{[a,b)}\AA^1=L^{\ge a}\AA^1/L^{\ge b}\AA^1$. Then $Y\subset L\frc^*$ being fp-closed means there is a positive integer $N$ and a closed subscheme $Y_N\subset (L^{[-N,N)}\AA^1)^r$ such that $Y\subset(L^{\ge -N}\AA^1)^r$ and it is the preimage of $Y_N$ under the projection $(L^{\ge -N}\AA^1)^r\to (L^{[-N,N)}\AA^1)^r$. 

The proof of this lemma uses results from \cite{BKV}, and will appear in the sequel to this paper.



\begin{defn} A {\em polar characteristic subset} is a reduced closed subscheme of $L\frc^{*}$ of the form $Y_{(M,\l)}$ for some polar datum $(M,\l)$. A {\em polar subset of $L\frg^{*}$} is the preimage of a polar characteristic subset in $L\frc^{*}$ under the map $\chi^{*}$. We denote 
\begin{equation*}
Z_{(M,\l)}=\chi^{*,-1}(Y_{(M,\l)}).
\end{equation*}
\end{defn}
Again $Z_{(M,\l)}$ depends only on the $G(F)$-orbit of $(M,\l)$.

\begin{exam} The pair $(G,0)$ is a polar datum for $G$. We have
\begin{equation*}
Y_{(G,0)}=(L\frc^{*})_{tn}, \quad Z_{(G,0)}=(L\frg^{*})_{tn}.
\end{equation*}
More generally, letting $\frz=\Lie Z(G)$, then for any $\l\in \frz^*/\frz^*_{tn}$, the pair $(G,\l)$ is a polar datum for $G$, with
\begin{equation*}
    Z_{(G,\l)}=\l+(L\frg^{*})_{tn}.
\end{equation*}
\end{exam}

\begin{theorem}\label{th:partition c*} The subschemes $Y_{(M,\l)}$, as $(M,\l)$ runs over $\PD(G)/G(F)$, form a partition of $L\frc^*$.

Consequently, $Z_{(M,\l)}$, where $(M,\l)$ runs over $\PD(G)/G(F)$ form a partition of $L\frg^{*}$.
\end{theorem}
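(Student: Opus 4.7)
The plan is to attach canonically to each $y \in L\frc^*(k) = \frc^*(F)$ a $G(F)$-orbit of polar data by lifting $y$ to the abstract Cartan $\frt^*$ and reading off its polar part. The same construction yields both covering (every $y$ lies in some $Y_{(M,\l)}$) and disjointness of the $Y_{(M,\l)}$'s.

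For covering, given $y \in \frc^*(F)$, pick a tame finite extension $E/F$ and lift $y$ to some $\tilde y \in \frt^*(E)$ through the finite $\WW$-Galois cover $\chi^*: \frt^* \to \frc^*$; such a tame lift exists by \eqref{char ass}. By an appropriate cocycle twist of a maximal $F$-torus $T \subset G$ that splits over $E$, we may arrange $\tilde y \in \frt^*(F)$. Writing $\tilde y = \sum_n c_n t_E^n\, dt_E$ with $c_n \in \frt^*_0$ over the split form, the subset
\begin{equation*}
\Phi^\v_M := \{\alpha^\v \in \Phi^\v(G_E,T_E) : \langle d\alpha^\v, c_n\rangle = 0 \textup{ for all } n<0\}
\end{equation*}
is $\QQ$-closed and $\Gal(E/F)$-stable (the latter because $\tilde y$ is Galois-invariant), so by \eqref{tw Levi RS} it defines a twisted Levi $M \subset G$ over $F$ containing $T$. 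The polar part of $\tilde y$ then represents a class $\l \in \frt^*_M/\frt^*_{M,tn}$ that is $G$-regular by the defining property of $\Phi^\v_M$. Hence $(M,\l)$ is a polar datum, and since $\tilde y$ differs from a representative of $\l$ by an element of $\frt^*_{tn} \subset (L\fm^*)_{tn}$, we conclude $y = \chi^*(\tilde y) \in Y_{(M,\l)}$.

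For disjointness, suppose $y \in Y_{(M_1,\l_1)}$ via $y = \chi^*(x_1)$ with $x_1 = \l_1 + z_1$ and $z_1 \in (L\fm_1^*)_{tn}$. Pick a maximal $F$-torus $T' \subset M_1$ splitting over a tame extension $E'/F$. Using $(L\fm_1^*)_{tn} = (\chi^*_{M_1})^{-1}((L\frc^*_{M_1})_{tn})$, lift $\chi^*_{M_1}(z_1)$ through the Chevalley map $\frt'^* \to \frc^*_{M_1}$ for $M_1$ to some $\tilde z_1 \in \frt'^*(E')_{tn}$. Set $\tilde x_1 := \l_1 + \tilde z_1 \in \frt'^*(E')$; by the $M_1$-centrality of $\l_1 \in \frt^*_{M_1} \subset \frt'^*$ (which gives a translation action of $\frt^*_{M_1}$ on $\frc^*_{M_1}$), we have $\chi^*(\tilde x_1) = \chi^*(x_1) = y$. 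Thus $\tilde x_1$ is a lift of $y$ to the Cartan whose polar part is exactly $\l_1$. Applying the construction of the previous paragraph to $\tilde x_1$, the defining coroots of the extracted twisted Levi are those annihilating $\l_1$, which by $G$-regularity of $\l_1$ (and Lemma \ref{l:stab polar data}) are exactly $\Phi^\v(M_1, T')$; hence the extracted polar datum equals $(M_1,\l_1)$. Since different choices of $(E, T, \tilde y)$ in the canonical construction are related by the $\WW \rtimes \Gal(E/F)$-action, realized by $G(F)$-conjugation on the resulting polar data, this forces $(M_1,\l_1) \sim_{G(F)} (M_2,\l_2)$ whenever $y \in Y_{(M_1,\l_1)} \cap Y_{(M_2,\l_2)}$. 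The partition of $L\frg^*$ follows at once from $Z_{(M,\l)} = \chi^{*,-1}(Y_{(M,\l)})$.

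The main technical obstacle is the integral compatibility under the Chevalley map used in the disjointness step: the pullback of $(L\frc^*_M)_{tn}$ under $\frt'^* \to \frc^*_M$ must lie in $\frt'^*_{tn}$. This is a Kostant-type statement about the canonical $\cO_F$-models of $\frc^*_M$ and $\frt'^*/W(M, T')$; under \eqref{char ass} it is expected to hold, but requires careful verification in the twisted non-split setting, and is exactly the foundational material the authors defer to the sequel (using Bruhat-Tits theory and \cite{BKV}). A secondary subtlety is the Galois descent of $M_E$ to $F$ in the covering step, handled by the cocycle twist of the maximal torus producing a $\Gal(E/F)$-invariant lift $\tilde y$.
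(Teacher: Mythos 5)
Your covering argument is essentially the paper's: lift $y$ to the Lie algebra of a maximal $F$-torus $T$, read off the polar part $\l$, and define $M$ by the $\QQ$-closed $\Gal$-stable coroot subsystem annihilating $\l$. Fine.

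Your disjointness argument, however, has a genuine gap at the last step. You construct, for $i=1,2$, a lift $\tilde x_i$ of $y$ to a split Cartan over a tame $E$ whose polar part is exactly $\l_i$, and correctly identify the extracted polar datum with $(M_i,\l_i)$. But then you assert that ``different choices of $(E,T,\tilde y)$ in the canonical construction are related by the $\WW\rtimes\Gal(E/F)$-action, realized by $G(F)$-conjugation.'' This is precisely what needs proof and is not true at the level you invoke it: the element relating two such Cartan lifts lies in $N_G(T)(E)$, so it only realizes $G(E)$-conjugacy. The descent from $G(E)$-conjugacy to $G(F)$-conjugacy is a real step, and it is exactly where the paper uses Lemma \ref{l:stab polar data}: because $\Stab_G(M,\l)=M$ is connected, the transporter $\{g\in G : g\cdot(M_1,\l_1)=(M_2,\l_2)\}$ is an $F$-form of an $M_1$-torsor, hence trivial because $F$ is a $C_1$-field, giving an $F$-point. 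You cite Lemma \ref{l:stab polar data} in passing but only to identify the root subsystem attached to $\l_1$, not to run this torsor argument; without it your proof shows $G(E)$-conjugacy and stops.

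Two further remarks. First, the paper's argument is organized differently and more cleanly: it does the descent reduction \emph{first} (replace $F$ by $E$ at the outset, justified by the torsor/$C_1$ argument), and only then performs the Weyl-group comparison on a single split Cartan over $E$, where $(M_1,\l_1)$ and $(M_2,\l_2)$ are forced to be literally \emph{equal}. Your two-step lift through $\frc^*_{M_1}$ is a legitimate alternative way to produce a Cartan lift with prescribed polar part, but it is not simpler and it leaves the $F$-descent issue hanging. Second, the integrality worry you raise --- that a lift of an element of $(L\frc^*_{M})_{tn}$ to a Cartan lands in $\frt'^*_{tn}$ --- is a fair point; the paper's sketch also uses this implicitly when asserting that $\l_i$ equals the image of $\wt a$ in $\frt^*(E)/\frt^*(E)_{tn}$, so flagging it is reasonable, but it should not be presented as the ``main'' obstacle when the $F$-conjugacy step is the one that is actually missing from your write-up.
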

\begin{proof}
    We need to check two things:
    \begin{enumerate}
        \item Any $a\in \frc^*(F)$ lies in $Y_{(M,\l)}$ for some polar datum $(M,\l)$.

        Any point $a\in \frc^*(F)$ lies in the image of $\chi|_\frt^*:\frt^*(F)\to \frc^*(F)$ for some maximal $F$-torus $T$. Let $\wt a\in \frt^*(F)$ be a preimage of $a$, and let $\l\in \frt^*/\frt^*_{tn}$ be the image of $\wt a$. Let $E/F$ be a tame splitting field of $T$, and let $\Phi'\subset \Phi(G_{E}, T_E)$ be those roots such that $d\a^\v(\wt a)\in \om(\cO_E)$. Then $\Phi'$ is a $\QQ$-closed subsystem invariant under $\Gal(E/F)$, which defines a twisted Levi subgroup $M\subset G$ containing $T$ (see \eqref{tw Levi RS}). By construction, $(M,\l)$ is a polar datum, and clearly $a\in Y_{(M,\l)}$.
        
        \item If $a\in\frc^*(F)$ lies in both $Y_{(M_1,\l_1)}$ and $Y_{(M_2,\l_2)}$, then the polar data $(M_1,\l_1)$ and $(M_2,\l_2)$ are $G(F)$-conjugate.

        Suppose $E$ is a tame extension of $F$ and $(M_1,\l_1)$ and $(M_2, \l_2)$ are $G(E)$-conjugate, we claim that they are indeed $G(F)$-conjugate. Indeed, the transporter $Q=\{g\in G|g\cdot (M_1,\l_1)=(M_2,\l_2)\}$ is non-empty since it has an $E$-point, hence it is a torsor under $\Stab_G(M_1,\l_1)=M_1$ (see Lemma \ref{l:stab polar data}). Since $M_1$ is connected and $F$ is a $C_1$-field, $Q$ is a trivial $M_1$-torsor. In particular, $Q(F)\ne \vn$, which implies that  $(M_1,\l_1)$ and $(M_2, \l_2)$ are $G(F)$-conjugate.

        By the above argument, we can replace $F$ by an arbitrary tame extension $E$. We choose $E$ such that $a$ lifts to $\wt a_i\in \frt^*_i$ for some {\em split} maximal $E$-torus $T_i\subset M_i$, $i=1,2$. We need to show that $(M_1,\l_1)$ and $(M_2,\l_2)$ are $G(E)$-conjugate. Since $T_1$ and $T_2$ are  $G(E)$-conjugate, we may as well assume $T_1=T_2$, and we denote it by $T$. Then $\wt a_1,\wt a_2\in \frt^*$ have the same image $a$ in $\frc^*$, therefore there exists $w\in W:=W(G_E,T_E)$ such that $w\wt a_1=\wt a_2$. Choose a lifting $\dot w$ of $w$ to $N_G(T)(E)$ and conjugate $(M_1,\l_1)$ by $\dot w$, we may assume $\wt a_1=\wt a_2$, which we denote by $\wt a$. Now both $\l_1$ and $\l_2$ must be equal to the image of $\wt a$ in $\frt^*(E)/\frt^*(E)_{tn}$, and both $M_1$ and $M_2$ corresponds to the subsystem of $\Phi(G_E, T_E)$ consisting of $\a$ such that $d\a^\v(\l))=0\in \om(E)/\om(\cO_E)$. We conclude $(M_1, \l_1)=(M_2,\l_2)$.
    \end{enumerate}
\end{proof}

\begin{remark}
	The above theorem can be rephrased as a topological Jordan decomposition, which is not restricted to compact elements. It, however, depends on the choice of the uniformizing parameter $t$. The polar partition in Theorem \ref{th:partition c*} is independent of the choice of the uniformizing parameter. 
\end{remark}

\subsection{The case of $\SL_2$} Assume $p\ne 2$ and let $G=\SL_{2}$. In this case, we describe explicitly the partition of $L\frc^{*}$ into polar subsets. 

We identify $\sl_{2}^{*}$ with $\sl_{2}(F)dt$ using the trace pairing $(A,B)=\tr(AB)$. Then $\frc^{*}$ is identified with the affine line of quadratic differentials $\AA^{1}_{F}(dt)^{2}$, with $\chi^{*}: \sl_{2}^{*}\cong\sl_{2}dt\to \AA^{1}(dt)^{2}=\frc^{*}$ given by the determinant map $Adt\mt \det(A)(dt)^{2}$.

We have three kinds of polar data:
\begin{enumerate}
\item Let $S\cong \Gm\subset G$ is a split torus. We have $\frs^{*}=Fdt$, $\frs^*_{tn}=\cO_F dt$, so that $\frs^{*}/\frs^*_{tn}=(F/\cO_{F})dt$. A polar datum $(S,\l)$ is of the form
\begin{equation*}
\l=(a_{-n}t^{-n}+\cdots +a_{-1}t^{-1})dt
\end{equation*}
with $n\ge1$ and $a_{-n}\ne0$. 

The map $\chi^{*}:\frs^{*}\to \frc^*(F)=F(dt)^{2}$ sends $adt$ to $-a^{2}(dt)^{2}$ for $a\in F$. The  corresponding $Y_{(S,\l)}\subset L\frc^{*}=L\AA^{1}$ has the form
\begin{equation*}
(\a_{-2n}t^{-2n}+\a_{-2n+1}t^{-2n+1}+\cdots+\a_{-n-1}t^{-n-1}+t^{-n}\cO_{F})(dt)^{2}
\end{equation*}
where $(\a_{-2n},\cdots, \a_{-n-1})$ are determined by $(a_{-n},\cdots, a_{-1})$ and vice versa. In particular $\a_{-2n}=-a_{-n}^2\ne0$.

As $\l$ varies, the union of $Y_{(S,\l)}$ exhaust points in $a(dt)^2\in \frc^*(F)$ where the valuation $v(a)$ is even and $\le -2$. 

\item Let $T\subset G$ be a nonsplit torus. Let $E=k\lr{t^{1/2}}$, then $T\cong \ker(\Nm: R_{E/F}\Gm\to \Gm)$. Therefore we can identify $\frt$ with $\ker(\tr: E\to F)=t^{1/2}F$, and $\frt_c=\frt\cap \cO_E=t^{1/2}\cO_{F}$. Under the residue pairing we identify $\frt^{*}$ with $t^{-1/2}Fdt$, and $\frt^*_{tn}=t^{-1/2}\cO_{F}dt$, so that $\frt^{*}/\frt^*_{tn}\cong (F/\cO_{F})t^{-1/2}dt$. A polar datum $(T, \l)$ is of the form
\begin{equation*}
\l=(a_{-n}t^{-n}+\cdots+a_{-1}t^{-1})t^{-1/2}dt
\end{equation*}
with $n\ge1$ and $a_{-n}\ne0$. 

The map $\chi^{*}: \frt^{*}\to \frc^{*}(F)=F(dt)^{2}$ sends $t^{-1/2}adt$ to $\Nm(t^{-1/2}a)(dt)^{2}=-t^{-1}a^{2}(dt)^{2}$, for $a\in F$. The corresponding $Y_{(T,\l)}\subset L\frc^{*}=L\AA^{1}$ has the form
\begin{equation*}
(\a_{-2n-1}t^{-2n-1}+\a_{-2n}t^{-2n}+\cdots+\a_{-n-2}t^{-n-2}+t^{-n-1}\cO_{F})(dt)^{2}
\end{equation*}
where $(\a_{-2n-1},\cdots, \a_{-n-2})$ are determined by $(a_{-n},\cdots, a_{-1})$ and vice versa. In particular $\a_{-2n-1}=-a_{-n}^2\ne0$.

As $\l$ varies, the union of $Y_{(T,\l)}$ exhaust points in $a(dt)^2\in \frc^*(F)$ where the valuation $v(a)$ is odd and $\le -3$. 

\item $(G,0)$. The corresponding polar subset $Y_{(G,0)}=(L\frc^*)_{tn}\subset L\frc^{*}$ is the union of $\chi^{*}(\frs^*_{tn})$ and $\chi^{*}(\frt^*_{tn})$, where $S$ and $T$ are the split and nonsplit tori in $G$. From the descriptions of $\frs^*_{tn}$ and $\frt^*_{tn}$ above, we have $\chi^{*}(\frs^*_{tn})=(\cO_{F})^{2}(dt)^{2}$, and $\chi^{*}(\frt^*_{tn})=t^{-1}(\cO_{F})^{2}(dt)^{2}$. Their union is 
\begin{equation*}
Y_{(G,0)}=t^{-1}\cO_{F}(dt)^{2}=\{a(dt)^2|v(a)\ge-1\}.
\end{equation*}
\end{enumerate}
From the above calculations we see immediately that $\frc^{*}(F)=F(dt)^2$ is the disjoint union of $Y_{(M,\l)}$ for various polar data $(M,\l)$.

\subsection{Polar data and Yu data}\label{ss:Yu data}

In \cite{Yu}, J.K.Yu constructs a supercuspidal representation of a $p$-adic group from the following {\em Yu datum}:
\begin{equation}\label{Yu data}
    (\orr{G}, \pi_0, \orr{\phi})
\end{equation}
where $\orr{G}=(G^{(0)}\subsetneq G^{(1)}\subsetneq \cdots\subsetneq G^{(d)}=G)$ is a sequence of twisted Levi subgroups of $G$, $\pi_0$ an irreducible supercuspidal depth zero representation of $G^{(0)}$, and $\orr{\phi}=(\phi_0,\phi_1,\cdots, \phi_d)$, where each $\phi_i$ is a linear character of $G^i(F)$. In this section, starting from a polar datum, we shall obtain something very close to a Yu datum, except for $\pi_0$, whose Lie algebra analog will be recaptured in \S\ref{ss:cs loop}.

\sss{From polar data to twisted Levi sequences}\label{sss:Levi seq} 
Given a polar datum $(M,\l)$ for $G$, we will attach such a sequence of twisted Levi sequence as follows. 

Recall the relative coroot scheme $\un\Phi^{\v}(G,T_{M})$ from \S\ref{sss:rel coroots}. For any $r\in \QQ_{\ge 0}$, let 
\begin{equation*}
\un\Phi^{\v}(G,T_{M})_{\ge -r}\subset \un\Phi^{\v}(G,T_{M})
\end{equation*}
be the subscheme whose $F_{\infty}$-points are those $\a^{\v}:\GG_{m,F_{\infty}}\to T_{M,F_{\infty}}$ such that $\j{d\a^{\v},\l}\in t^{-r}\cO_{F_{\infty}}\frac{dt}{t}$. 


For $r>0$, let
\begin{equation*}
\un\Phi^\v(G,T_{M})_{>-r}=\bigcup_{r'<r}\un\Phi^\v(G,T_{M})_{\ge-r'}.
\end{equation*}
We also define $\un\Phi^\v(G,T_{M})_{>0}=\vn$. 

There are finitely many values $r\ge0$ such that $\un\Phi^\v(G,T_{M})_{\ge-r}\ne \un\Phi^\v(G,T_{M})_{>-r}$. We order them as
\begin{equation}\label{r seq}
\orr{r}: 0\le  r_{0}<r_{1}<\cdots<r_{d-1}.
\end{equation}

By the bijection \eqref{tw Levi RS}, each $\un\Phi(G,A_{M})_{\ge -r_{j}}$ corresponds uniquely to a twisted Levi subgroup $G^{(j)}$ of $G$ that contains $M$. Note that $G^{(0)}=M$. We set $G^{(d)}=G$, and get a twisted Levi sequence $\orr{G}$:
\begin{equation}\label{tw Levi seq}
\orr{G}=(M=G^{(0)}\subsetneq G^{(1)}\subsetneq \cdots\subsetneq G^{(d)}=G).
\end{equation}



\sss{Linear characters} We now supply the analog of the sequence of characters $\orr{\phi}$ in the Yu data from $\l$.

Base change $T_M$ to $F_\infty$ where it splits, we have a canonical isomorphism $T_{M,F_\infty}\cong \xcoch(T_{M,F_\infty})\ot_{\ZZ}\Gm$, hence a canonical isomorphism of $F_\infty$-vector spaces
\begin{equation}\label{trivialize frtM}
    \frt^*_M(F_\infty)\cong \xch(T_{M,F_\infty})\ot_{\ZZ}\om(F_\infty).
\end{equation}
Using rational powers of $t$, we can write $\l\in \frt^*_M(F_\infty)/\frt^*_{M}(\cO_\infty)_{tn}$ uniquely as
\begin{equation}\label{decomp lambda}   
\l=\l^{(d)}+\l^{(d-1)}+\cdots+\l^{(0)}\mod \frt^*_{M}(\cO_\infty)_{tn}.
\end{equation}
such that under the isomorphism \eqref{trivialize frtM}, $\l^{(j)}$, for $j>0$, only has powers $t^{-r}\frac{dt}{t}$ for $r\in (r_{j-1}, r_j]$ (convention: $r_d=\infty$); and when $j=0$, $\l^{(0)}$ only involves $\frac{dt}{t}$. The uniqueness of the decomposition shows that each $\l^{(j)}$ is in fact inside $\frt_M^*$. 
From the construction we see that $G^{(j)}$ is the simultaneous centralizer
\begin{equation*}
    G^{(j)}=C_G(\l^{(d)},\l^{(d-1)},\cdots, \l^{(j)}).
\end{equation*}
In particular, $\l^{(j)}$ can be viewed as a Lie algebra homomorphism
\begin{equation*}
    \l^{(j)}: \frg^{(j)}\to k.
\end{equation*}
The sequence of linear characters $\orr{\l}=(\l^{(0)}, \l^{(1)},\cdots, \l^{(d)})$ is the Lie algebra analog of $\orr{\phi}$ in the Yu data.

\begin{remark}
    The passage from polar data to Yu data depends on the choice of the compatible system of uniformizers $t^{1/n}$ of tame extensions of $F$ to write the decomposition \eqref{decomp lambda}. However, the notion of polar data itself does not depend on such choices.
\end{remark}

\section{Character sheaves on loop Lie algebra}

We will propose a definition of the category of character sheaves on the loop Lie algebra $L\frg$ using Fourier transform, namely we shall first define a category of sheaves on the dual Lie algebra $L\frg^*$. 

The results stated in this section will be proved in a future publication.

Fix a prime $\ell\ne p$. The sheaves we consider will have $\Qlbar$-coefficients. We also formally adjoin a half Tate twist $(1/2)$. Fix a nontrivial character $\psi: \FF_p\to \Qlbar^\times$. Let $\AS_\psi$ be the corresponding Artin-Schreier local system on $\AA^1$ (over $k$). We use $\AS_\psi$ to define the Fourier-Deligne transform for sheaves on vector spaces over $k$.

\subsection{Recollections on character sheaves on Lie algebras}\label{ss:ind cusp}

Our proposal for the definition of character sheaves on the loop Lie algebra will be modeled after Lusztig's definition of character sheaves on a finite-dimensional Lie algebra, which we now review. 

Let $G$ be a connected reductive group over $k$. Lusztig \cite{L-Fourier} has defined the notion of character sheaves on the Lie algebra $\frg$, see also \cite{mirkovic_character_2004}. An object $\cF\in D_{G}(\frg^{*})$ is called an {\em orbital sheaf} if it is a simple perverse sheaf supported on the closure of a single coadjoint orbit. Let
\begin{equation*}
    \OS(\frg^*)\subset D_G(\frg^*)
\end{equation*}
be the cocomplete full subcategory generated by orbital sheaves. 

The category of character sheaves on $\frg$ is defined to be the full subcategory
\begin{equation*}
\CS(\frg)\subset D_{G}(\frg)
\end{equation*}
that is the image of $\OS(\frg^*)$ under the Fourier transform. 

Below we recall Lusztig's classification of orbital and character sheaves.

\sss{Nilpotent orbital sheaves}
Let $\cN^*\subset \frg^*$ be the nilpotent cone. Consider the full subcategory $D_G(\cN^*)\subset \OS(\frg^*)$ of equivariant sheaves supported on $\cN^*$. We denote by
\begin{equation*}
\CS_{\nil}(\frg)\subset \CS(\frg)
\end{equation*}
the image of Fourier transform of $D_G(\cN^*)$. Simple perverse sheaves in $\CS_{\nil}(\frg)$ (Fourier transform of nilpotent orbital sheaves) are Lie algebra analogues of unipotent character sheaves on $G$. We call $\CS_{\nil}(\frg)$ the category of {\em nilpotent character sheaves}.


\begin{defn}
    A {\em cuspidal datum} for $\frg^*$ is a triple $\g=(L,\cO, \cL)$ where
\begin{itemize}
    \item $L$ is a Levi factor of a parabolic subgroup of $G$;
    \item $\cO\subset \cN^*_L\subset \frl^*$ is a nilpotent coadjoint orbit of $L$;
    \item $\cL$ is an irreducible cuspidal $L$-equivariant local system on $\cO$. An $L$-equivariant local system $\cL$ on the nilpotent orbit $\cO$ is cuspidal if and only if the Fourier transform of its extension by zero is also supported on a single nilpotent orbit.
\end{itemize}
\end{defn}

Very few nilpotent orbits support cuspidal local systems, and they are classified by Lusztig in \cite{L-IC}. It turns out that if $\cL$ is cuspidal then it is  clean: $i_{!}\cL\isom i_{*}\cL$. We denote the latter by $\wt\cL\in D_L(\frl^*)$.

We denote the set of cuspidal data for $\frg^*$ by $\CD(\frg^*)$. It is acted on $G$ by conjugation.

\sss{Parabolic induction}
For a parabolic subgroup $P\subset G$ with unipotent radical $N_P$ and Levi quotient $L_{P}$, let $(\frn_P)^\bot\subset\frg^*$ be the annihilator of $\frn_P=\Lie N_P$, and $\frl_P=\Lie L_P$.  Consider  maps
\begin{equation*}
\xymatrix{[\frg^*/G] & [\frn_P^\bot/P]\ar[l]_-{\pi}\ar[r]^-{\e} & [\frl^*_{P}/L_{P}].
}
\end{equation*}
The diagram
\begin{equation*}
\xymatrix{\ind_{P}^{G}=\pi_{!}\e^{*}: D_{L_P}(\frl^*_{P})\ar@<1ex>[r] &  D_G(\frg^*): \res_{P}^{G}=\e_{*}\pi^{!} \ar@<1ex>[l]}
\end{equation*}
define the induction and restriction functors, which form a pair of adjoint functors.

By the generalized Springer correspondence \cite{L-CS}, we have a block decomposition
\begin{equation}\label{orb g block}
    D_G(\cN^*)\cong \bigoplus_{\g\in \CD(\frg^*)/G}\OS_{\g}(\frg^*)
\end{equation}
where for $\g=(L,\cO,\cL)\in \CD(\frg^*)$, $\OS_{\g}(\frg^*)\subset D_G(\cN^*)$ is the full subcategory generated under colimits (in particular, direct summands) by the objects
\begin{equation*}
    \cK_{\g, P}=\ind_P^G(\wt\cL)\in D_G(\cN^*)
\end{equation*}
for some (equivalently any) parabolic subgroup $P$ containing $L$ as a Levi factor. 


Taking Fourier transform yields a block decomposition
\begin{equation*}
\CS_{\nil}(\frg)=\bigoplus_{\g\in \CD(\frg^*)/G}\CS_{\g}(\frg)
\end{equation*}
where $\CS_{\g}(\frg)\subset D_G(\frg)$ is the image of $\OS_\g(\frg^*)$ under the Fourier transform.

\sss{General orbital sheaves}

We use the following notion to reformulate the classification of general orbital sheaves. 
\begin{defn}\label{d:ext cusp}
    An {\em extended cuspidal datum} for $\frg^*$ is a tuple $\y=(M_0,\l_0, L,\cO,\cL)$ where
    \begin{itemize}
        \item $\l_0\in \frg^*$ is a semisimple element;
        \item $M_0=C_G(\l_0)$ is the centralizer of $\l_0$;
        \item $(L,\cO,\cL)$ is a cuspidal datum for the dual Lie algebra $\fm_0^*$ of $M_0$.
    \end{itemize}
\end{defn}
Of course $M_0$ is determined by $\l_0$, but it will be convenient to include it in the data.

Let $\ECD(\frg^*)$ be the set of extended cuspidal data for $\frg^*$. Then $G$ acts on $\ECD(\frg^*)$ by conjugation. 

For each $\y=(M_0,\l_0, L,\cO,\cL)\in \ECD(\frg^*)$, we have a closed embedding of stacks
\begin{equation*}
    i_{M_0,\l_0}: [\cN^*_{M_0}/M_0]\xr{t_{\l_0}}[(\l_0+\cN^*_{M_0})/M_0]\incl [\frg^*/G].
\end{equation*}
where the first map is shifting by $\l_0$ (which is invariant under $M_0$). This gives a fully faithful functor
\begin{equation*}
    i_{(M_0,\l_0)!}: D_{M_0}(\cN^*_{M_0})\incl D_G(\frg^*).
\end{equation*}
We define $\OS_\y(\frg^*)$ to be the image of $\OS_{\g}(\fm^*_0)\subset D_{M_0}(\cN^*_{M_0})$ under the above functor, where $\g=(L,\cO,\cL)\in \CD(\fm^*_0)$. We then have a block decomposition
\begin{equation*}
    \OS(\frg^*)=\bigoplus_{\y\in \ECD(\frg^*)/G}\OS_{\y}(\frg).
\end{equation*}
Taking Fourier transform we obtain a block decomposition for $\CS(\frg)$:
\begin{equation}\label{CS g blocks}
\CS(\frg)=\bigoplus_{\y\in \ECD(\frg^*)/G}\CS_{\y}(\frg).
\end{equation}

For the loop Lie algebra, polar data will play the role of the pair $(M_0,\l_0)$ above. We will define in \S\ref{ss:cs loop} an affine analog of the extended cuspidal data, and use it to define character sheaves on the loop Lie algebra.

\subsection{Fourier transform on loop Lie algebra}

We briefly sketch how Fourier transform can be defined for sheaves on infinite-dimensional vector spaces such as $L\frg$, and how it can be upgraded to a $LG$-equivariant version.

\sss{Categorical half-densities}
We consider a renormalized version $D^\bu(L\frg)$ of $\Qlbar$-sheaves on $L\frg$ that can be thought of as a geometric incarnation of half-densities.

The category $D^\bu(L\frg)$ is defined as the colimit
\begin{equation*}
    D^\bu(L\frg)=\colim_{\L'\subset \L} D(\L/\L')
\end{equation*}
over pairs of lattices $\L'\subset \L\subset \frg$, with the transition functors, for $\L_1'\subset \L'\subset \L\subset \L_1$,
\begin{equation}\label{trans D(L/L')}
i_{*}\pr^{\bu}: D(\L/\L')\to D(\L_{1}/\L_{1}').
\end{equation}
Here $i: \L/\L'_1\incl \L_1/\L'_1$ is the inclusion, $\pr: \L/\L'_1\to \L/\L'$ is the projection, $\pr^{\bu}=p^{*}[d](d/2)\cong p^{!}[-d](-d/2)$, where $d=\dim_{k}(\L'/\L_{1}')$. Note that $i_*\pr^\bu$ is $t$-exact for the perverse t-structures. The category $D^\bu(\frg)$ thus carries a perverse $t$-structure extending the usual perverse $t$-structures on each $D(\L/\L')$.

Similar construction applies to $L\frg^*$.

\sss{Fourier transform} The Fourier transform functor
\begin{equation}\label{FT plain}
    \FT: D^\bu(L\frg^*)\to D^\bu(L\frg)
\end{equation}
is defined as the colimit of the usual Fourier transform
\begin{equation*}
\FT_{\L/\L'}:D(\L/\L')\to D(\L'^\bot/\L^\bot). 
\end{equation*}
Here for a lattice $\L\subset L\frg^*$, $\L^\bot\subset \frg^*$ is the dual lattice defined as the annihilator of $\L$ under the residue pairing \eqref{res pairing g}. Note that $\FT_{\L/\L'}$ is also normalized by a shift and Tate twist to be perverse $t$-exact and preserving pure weight zero objects.

\sss{$LG$-equivariant half-densities}
To define the category of $LG$-equivariant sheaves (or rather half-densities) on $L\frg$, we follow an idea in the work \cite{BKV0}.  Fix an Iwahori subgroup $\bI\subset LG$. Consider the category $\frP$ of standard paraboric subgroups of $LG$: objects in $\frP$ are parahoric subgroups $\bP\subset LG$ containing a fixed Iwahori $\bI$; for $\bP,\bP'\in \frP$, the morphism set $\Mor_{\frP}(\bP,\bP')$ is the set of double cosets $\bP' g\bP$ such that $\Ad(g)\bP\subset \bP'$. We define
\begin{equation}\label{lim LGeq}
    D^\bu_{LG}(L\frg):= \lim_{\frP}D_{\bP}^\bu(L\frg)
\end{equation}
where transition functors are $!$-forgetful functors. Passing to left adjoints, $D^\bu_{LG}(L\frg)$ can also be described as a colimit
\begin{equation}\label{colim LGeq}
    D^\bu_{LG}(L\frg)\cong \colim_{\frP}D_{\bP}^\bu(L\frg)
\end{equation}
using averaging functors as transition functors. The canonical functor $D_{\bP}^\bu(L\frg)\to D^\bu_{LG}(L\frg)$ under the colimit presentation \eqref{colim LGeq} is the $!$-averaging from $\bP$-equivariance to $LG$-equivariance. In particular, for any subgroup $\bK\subset LG$ that is contained in a parahoric subgroup of finite codimension, we have a $!$-averaging functor
\begin{equation*}
    \Av_{\bK,!}^{LG}: D^\bu_{\bK}(L\frg)\to D^\bu_{LG}(L\frg).
\end{equation*}
The same discussion applies to define $D^\bu_{LG}(L\frg^*)$.

For a closed $LG$-invariant sub-indscheme $Z\subset L\frg$, we denote by $D^\bu_{LG}(Z)$ the full subcategory of $D^{\bu}_{LG}(L\frg)$ consisting of objects whose support (in the $!$-sense) is contained in $Z$.

\sss{Equivariant Fourier transform}\label{sss:eq FT}
The definition of the Fourier transform \eqref{FT plain} easily upgrades to $\bP$-equivariant categories: one only needs to consider lattices that are stable under $\bP$. Taking limits as $\bP$ varies, we get the $LG$-equivariant Fourier transform
\begin{equation}\label{LG equiv FT}
    {}^{LG}\FT: D^\bu_{LG}(L\frg^*)\isom D^\bu_{LG}(L\frg).
\end{equation}
The basic calculation of ${}^{LG}\FT$ is as follows. Let $\bK\subset LG$ be a subgroup of finite codimension in a parahoric subgroup. Let $\L\subset \frg$ be a $\bK$-stable lattice, and $\l\in \frg^*/\L^\bot$ be such that the shifted lattice $\l+\L^\bot$ is also $\bK$-stable. Let $\const{\l+\L^\bot:\L^\bot}$ be the image of the constant sheaf in $D_{\bK}(\wt\L/\L^\bot)$ (for any $\bK$-stable lattice $\wt\L$ containing both $\l$ and $\L^\bot$) under the canonical embedding $D_{\bK}(\wt\L/\L^\bot)\incl D^\bu_{\bK}(L\frg^*)$. 

On the other hand, we may view $\l$ as a continuous linear function $\l: \L\to k$ using the residue pairing. For any $\bK$-stable lattice 
$\L'\subset \ker(\l)\subset \L$, let $\AS_{\l, \L/\L'}[d_{\L/\L'}](d_{\L/\L'}/2)\in D_{\bK}^\bu(\L/\L')$ be the pulling back of the Artin-Schreier local system $\AS_\psi$ along $\l$. Here $d_{\L/\L'}=\dim_k(\L/\L')$. Let $\AS_{\l,\L}\in D^\bu_{\bK}(L\frg)$ be the image of and such $\AS_{\l, \L/\L'}[d_{\L/\L'}](d_{\L/\L'}/2)$. Then we have
\begin{equation*}
    {}^{LG}\FT(\Av^{LG}_{\bK, !}\const{\l+\L^\bot:\L^\bot})\cong \Av^{LG}_{\bK, !}\AS_{\l, \L}\in D^\bu_{LG}(L\frg).
\end{equation*}

\subsection{Character sheaves on $L\frg$}\label{ss:cs loop}

For the loop group $LG$, a {\em parahoric-Levi subgroup} is a subgroup $L\subset LG$ such that there exists a parahoric subgroup $\bP\subset LG$ containing $L$ as a Levi factor, i.e., the composition $L\subset \bP\to L_\bP$ (the reductive quotient of $\bP$) is an isomorphism. 

\begin{defn}
    A {\em polar-cuspidal datum} for $G$ is a tuple $\xi=(M,\l, L, \cO, \cL)$ where
    \begin{itemize}
        \item $(M,\l)$ is a polar datum for $G$;
        \item $L$ is a parahoric-Levi  subgroup of $LM$;
        \item $\cO\subset \cN^*_{L}$ is a nilpotent co-adjoint orbit of $L$, and $\cL$ is an $L$-equivariant cuspidal local system on $\cO$.
    \end{itemize}
\end{defn}
Let $\PCD(G)$ be the set of polar-cuspidal data for $G$. There is an action of $G(F)$ on $\PCD(G)$ by conjugation, lifting its action on polar data. 

\begin{remark}\label{r: toral no cusp}
    For a toral polar datum $(T,\l)$, there is only one choice of $(L,\cO,\cL)$ making $(T,\l, L,\cO,\cL)$ a polar-cuspidal datum: namely, $L$ is the Levi factor of the unique parahoric subgroup $\bT$ of $LT$, $\cO=\{0\}$ and $\cL$ the trivial $L$-equivariant local system. 
\end{remark}

\sss{The complexes $\cK_{\xi,\bQ}$}\label{sss:Kxi Q}
Fix a polar-cuspidal datum $\xi=(M,\l,L,\cO,\cL)$, we shall define a full subcategory
\begin{equation*}
    \OS_\xi(L\frg^*)\subset D^\bu_{LG}(L\frg^*)
\end{equation*}
as follows. Choose a parahoric subgroup $\bQ\subset LM$ containing $L$ as a Levi factor. Let $\frq^\bot\subset L\frg^*$ be the annihilator of $\frq=\Lie \bQ$ under the residue pairing. Consider the following subscheme of $L\fm^*$ stable under the coadjoint action of $\bQ$
\begin{equation*}
    \l+\cO+\frq^\bot\subset \l+(L\fm^*)_{tn}.
\end{equation*}
Since $\frt^*_{M,tn}\subset \frq^\bot$, and $\l$ is well-defined up to $\frt^*_{M,tn}$, the above coset makes sense.  Consider the diagram where the maps are induced by the obvious inclusions or projections
\begin{equation*}
   \xymatrix{ [\cO/L] & \ar[l]_-{\e_{\bQ, \l, \cO}}[(\l+\cO+\frq^\bot)/\bQ] \ar[r]^-{\pi_{\bQ, \l, \cO}} & [\l+(L\fm^*)_{tn}/LM]
   }
\end{equation*}
We form the object
\begin{equation*}
    \cK_{\xi,\bQ}^M:=(\pi_{\bQ,\l,\cO})_!\e_{\bQ, \l,\cO}^*\cL\in D^\bu_{LM}(\l+(L\fm^*)_{tn})\subset D^\bu_{LM}(L\fm^*).
\end{equation*}

We observe that the inclusion
\begin{equation*}
\l+(L\fm^{*})_{tn}\subset Z_{(M,\l)}
\end{equation*}
induces an isomorphism of stacks
\begin{equation}\label{Z reduce to M}
[(\l+(L\fm^{*})_{tn})/LM]\isom [Z_{(M,\l)}/LG].
\end{equation}
We mention without proof that this isomorphism induces an equivalence of categories
\begin{equation*}
    D^\bu_{LM}(\l+(L\fm^*)_{tn})\isom D^\bu_{LG}(Z_{(M,\l)}).
\end{equation*}
This is not obvious from our definition of $D^\bu_{LM}(L\fm^*)$ and $D^\bu_{LG}(L\frg^*)$ either as a limit \eqref{lim LGeq} or as a colimit \eqref{colim LGeq}. It can be proved by using Theorem \ref{th:compare JKY} or Remark \ref{r:Kr}.

Therefore it makes sense to view $\cK_{\xi,\bQ}^M$ as an object in $D^\bu_{LG}(Z_{(M,\l)})$. Via the embedding $i_{(M,\l)}: Z_{(M,\l)}\incl L\frg^*$, we define
\begin{equation}\label{ind cusp LM}
        \cK_{\xi,\bQ}:=i_{(M,\l)!}\cK^M_{\xi,\bQ}\in D^\bu_{LG}(L\frg^*).
\end{equation}

\begin{defn}
    Let $\OS_{\xi}(L\frg^*)\subset D^\bu_{LG}(L\frg^*)$ be full subcategory generated under colimits (in particular direct summands) by the objects $\cK_{\xi,\bQ}$ for various parahoric subgroups $\bQ\subset LM$ containing $L$ as a Levi factor. 
\end{defn}

\begin{remark}
    It can be checked that $\OS_{\xi}(L\frg^*)$ is generated under colimits by an objet of the form \eqref{ind cusp LM} for a single choice of parahoric subgroup $\bQ\subset LM$.
\end{remark}

\begin{exam}[Nilpotent case]
Consider the case $(M,\l)=(G,0)$. In this case, let $\xi=(G,0,L,\cO,\cL)$ be a polar-cuspidal datum.  For any parahoric subgroup $\bP\subset LG$ with Lie algebra $\frp$, pro-nilpotent radical $\frp^+$ and Levi quotient $L_{\bP}$, consider the induction diagram
\begin{equation*}
\xymatrix{[L\frg^{*}/LG] & [(\frp^+)^{\bot}/\bP]\ar[l]_-{\pi}\ar[r]^-{\e} & [\frl^{*}_{\bP}/L_{\bP}].
}
\end{equation*}
Consider the parahoric induction functor
\begin{equation}\label{ind Lg*}
\ind_{\bP}^{LG}=\pi_{!}\e^{*}: D_{L_{\bP}}(\frl^{*}_{\bP})\to D^\bu_{LG}(L\frg^{*}).
\end{equation}
The functor $\pi_!$ is defined because $\pi$ is ind-proper. If $\bP$ contains $L$ as a Levi factor, we may view the clean extension $\wt\cL$ of $\cL$ as an object in $D_{L_\bP}(\frl^*_\bP)\cong D_{L}(\frl^*)$. Then $\OS_\xi(L\frg^*)$ is generated by $\ind_{\bP}^{LG}(\wt\cL)$ for some (equivariantly any) parahoric subgroup $\bP\subset LG$ containing $L$ as a Levi factor.

If we further specialize to the case $\bP=\bI$, so that $L=T_0\subset \bI$ is a maximal torus, we call the resulting polar-cuspidal datum the {\em principal polar-cuspidal datum}
\begin{equation}\label{princ pcd}
    \xi_0=(G,0,T_0, \{0\}, \Qlbar).
\end{equation}
The sheaf $\cK_{\xi_0, \bI}$ is the affine analogue of the Springer sheaf (direct image of the Springer resolution $\wt\cN\to \cN$).
\end{exam}

\begin{exam}[Toral case]\label{ex:toral coCS}
    Let $(T,\l)$ be a toral polar datum. As we mentioned in Remark \ref{r: toral no cusp}, there is a unique polar-cuspidal datum $\xi$ extending $(T,\l)$. Let $\bT$ be the unique parahoric subgroup of $LT$, and let 
    \begin{equation*}
    \pi_{\bT, \l}: [\l+(L\frt^*)_{tn}/\bT]\to [L\frg^*/LG]
    \end{equation*}
    be induced by the natural inclusions. Then $\OS_\xi(L\frg^*)$ is generated by the object
    \begin{equation}\label{Kxi toral}
        \cK_\xi=(\pi_{\bT,\l})_!\Qlbar.
    \end{equation}
The sheaf  $\cK_{\xi}$ is supported on $Z_{(T,\l)}$ but its stalks are isomorphic to the group algebra $\Qlbar[\pi_0(LT)]$. This is because $\pi_{\bT,\l}$ factors as the composition
\begin{equation*}
    [\l+(L\frt^*)_{tn}/\bT]\to  [\l+(L\frt^*)_{tn}/LT]\cong [Z_{(T,\l)}/LG]\incl [L\frg^*/LG].
\end{equation*}
where the first map is a $\pi_0(LT)$-torsor, and the last one is a closed embedding.
\end{exam}

Finally we define character sheaves on $L\frg$.
\begin{defn}
\begin{enumerate}
    \item For a polar-cuspidal datum $\xi$ of $G$, define the full subcategory
    \begin{equation*}
        \CS_\xi(L\frg)\subset D^\bu_{LG}(L\frg)
    \end{equation*}
    to be the image of $\OS_{\xi}(L\frg^*)$ under the $LG$-equivariant Fourier transform \eqref{LG equiv FT}.

    \item Define $\CS(L\frg)\subset D^\bu_{LG}(L\frg)$ to be the cocomplete full subcategory generated by $\CS_\xi(L\frg)$ for all $\xi\in \PCD(G)$.  
\end{enumerate}
\end{defn}

Recall the complexes $\cK_{\xi,\bQ}$ in \S\ref{sss:Kxi Q}. Denote its Fourier transform by
\begin{equation}\label{Sxi Q}
    \cS_{\xi, \bQ}:={}^{LG}\FT(\cK_{\xi,\bQ})\in \CS_\xi(L\frg).
\end{equation}
Then $\CS_\xi(L\frg)$ is generated under colimits by $\cS_{\xi,\bQ}$ for some (equivalently, any) parahoric $\bQ\subset LM$ containing $L$ as a Levi factor. In the toral case where $\bQ$ is unique, we denote \eqref{Sxi Q} by $\cS_\xi$, i.e.,
\begin{equation}\label{Sxi toral}
    \cS_\xi={}^{LG}\FT(\cK_{\xi}), \quad \xi=(T,\l, \cdots) \textup{ is toral}.
\end{equation}

\begin{exam}[Affine Grothendieck-Springer sheaf]\label{ex:aff Spr} 
For the principal polar-cuspidal datum $\xi_0$ in \eqref{princ pcd}, we have
\begin{equation*}
    \cS_{\xi_0, \bI}\cong \Av_{\bI,!}^{LG}\const{\Lie\bI}.
\end{equation*}
The right hand side is the $!$-direct image of constant sheaf along the affine version of the Grothendieck alteration
\begin{equation*}
    \pi_{\bI}: [(\Lie \bI)/\bI]\to [L\frg/LG].
\end{equation*}
Therefore $\cS_{\xi_0,\bI}$ is the affine Grothendieck-Springer sheaf studied in \cite{BKV}. The principal block $\CS_{\xi_0}(L\frg)$ is generated by $\cS_{\xi_0,\bI}$ under colimits (including taking direct summands).
\end{exam}

\subsection{Bernstein decomposition}\label{ss:block}

\sss{Descent to twisted Levi}\label{sss:reduce to M}

For a polar-cuspidal datum $\xi=(M,\l,\cdots)$ of $G$,  the isomorphism \eqref{Z reduce to M} implies  equivalences
\begin{equation}\label{CS reduce to M}
    \OS_\xi(L\frg^*)\cong \OS_{\g}(L\fm^*), \quad \CS_\xi(L\frg)\cong \CS_{\g}(L\fm)
\end{equation}
where $\g$ is the polar-cuspidal datum $(M,0,L,\cO,\cL)$ of the twisted Levi subgroup $M$.

The following result may be thought of as the Lie algebra analog of the Bernstein decomposition.

\begin{theorem}\label{th:block} We have a block decomposition
\begin{equation*}
\CS(L\frg)=\bigoplus_{\xi\in \PCD(G)/G(F)}\CS_{\xi}(L\frg)
\end{equation*}
where the sum is over all $G(F)$-conjugacy classes of polar-cuspidal data.
\end{theorem}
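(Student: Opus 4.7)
The claim splits into \emph{generation}, that the subcategories $\CS_\xi(L\frg)$ jointly generate $\CS(L\frg)$ under colimits, which is immediate from the very definition of $\CS(L\frg)$; and \emph{orthogonality}, that $\RHom(A_1, A_2) = 0$ for all $A_i \in \CS_{\xi_i}(L\frg)$ whenever $\xi_1$ and $\xi_2$ lie in distinct $G(F)$-orbits. The plan is to apply the $LG$-equivariant Fourier transform \eqref{LG equiv FT}, which is an equivalence of categories, to reduce orthogonality to the analogous statement for $\OS_{\xi_i}(L\frg^*)$, and then to split into two cases according to whether the underlying polar data agree.

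Suppose first that $(M_1, \l_1)$ and $(M_2, \l_2)$ lie in distinct $G(F)$-orbits. By Theorem \ref{th:partition c*}, the closed sub-indschemes $Z_{(M_1, \l_1)}$ and $Z_{(M_2, \l_2)}$ of $L\frg^*$ are disjoint. Each generator $\cK_{\xi_i, \bQ_i}$ from \eqref{ind cusp LM} is of the form $i_{(M_i, \l_i)!}\cK^{M_i}_{\xi_i, \bQ_i}$, hence supported on $Z_{(M_i, \l_i)}$. By disjointness, $i_{(M_2, \l_2)}^* \cK_{\xi_1, \bQ_1} = 0$, and the adjunction $(i_{(M_2, \l_2)}^*, i_{(M_2, \l_2)*})$ yields $\RHom(\cK_{\xi_1, \bQ_1}, \cK_{\xi_2, \bQ_2}) = 0$. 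This extends formally to objects in the colimit-closures.

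Next suppose $(M_1, \l_1) = (M_2, \l_2) = (M, \l)$ but the cuspidal parts $(L_i, \cO_i, \cL_i)$ are not conjugate. Via the equivalence \eqref{CS reduce to M}, the problem reduces, with the twisted Levi $M$ in place of $G$ and the principal polar datum $(M, 0)$, to showing that the subcategories $\OS_{\g_i}(L\fm^*) \subset D^\bu_{LM}((L\fm^*)_{tn})$ are mutually orthogonal, where $\g_i = (M, 0, L_i, \cO_i, \cL_i)$. These subcategories are generated by parahoric inductions $\ind_{\bP_i}^{LM}\wt\cL_i$. Mirroring the finite-dimensional picture of \S\ref{ss:ind cusp}, one expects a Mackey-type formula decomposing $\RHom(\ind_{\bP_1}^{LM}\wt\cL_1, \ind_{\bP_2}^{LM}\wt\cL_2)$ into contributions indexed by the relative positions of $\bP_1$ and $\bP_2$; base change together with cleanness of $\wt\cL_i$ then reduces each contribution to a $\RHom$ between cuspidal local systems on a reductive quotient $L_\bP$, which vanishes by Lusztig's generalized Springer correspondence \cite{L-CS, L-IC}.

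The principal obstacle is this last step. Carrying out the Mackey computation in the loop setting requires a fully developed six-functor formalism for $D^\bu_{LM}$ compatible with the limit/colimit presentations \eqref{lim LGeq}--\eqref{colim LGeq}, ind-properness and base change for the diagrams appearing in parahoric induction \eqref{ind Lg*}, and cleanness of the extensions $\wt\cL$ in the parahoric-Levi framework. These foundational inputs belong to the sheaf theory on loop Lie algebras being developed in the wake of \cite{BKV}, which is why the paper defers the full proof to a sequel.
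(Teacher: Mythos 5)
Your proposal matches the paper's own sketch essentially step for step: Fourier transform reduces the claim to orthogonality of the $\OS_\xi(L\frg^*)$, which you split into the case where the underlying polar data $(M,\l)$ differ (handled by disjointness of the $Z_{(M,\l)}$ from Theorem \ref{th:partition c*} together with the $(i^*,i_*)$ adjunction along the closed embedding) and the case where they agree (handled by descending to the twisted Levi $M$ via \eqref{CS reduce to M} and appealing to the nilpotent theory). The only place you go into more detail than the paper is the second case, where you lay out a Mackey-type computation for the parahoric inductions; the paper's sketch simply says the nilpotent orthogonality "can be deduced from the finite-dimensional block decomposition \eqref{orb g block}," and your Mackey strategy is a reasonable way to implement exactly that reduction, with the same caveat you note about it resting on six-functor foundations for $D^\bu_{LM}$ that the paper defers to a sequel.
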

\begin{proof}[Sketch of proof]
    Since Fourier transform is an equivalence, it suffices to prove that for $\xi=(M,\l,L,\cO,\cL)$ and $\xi'=(M',\l',L',\cO',\cL')$ that are not $G(F)$-conjugate, $\OS_\xi(L\frg^*)$ and $\OS_{\xi'}(L\frg^*)$ are orthogonal to each other. 
    
    If the polar data $(M,\l)$ and $(M',\l')$ are not $G(F)$-conjugate, then $Z_{(M,\l)}/LG$ and $Z_{(M',\l')}/LG$ are disjoint closed substacks of $L\frg^*/LG$ by Theorem \ref{th:partition c*}. In this case,  objects in $\OS_\xi(L\frg^*)$ and $\OS_{\xi'}(L\frg^*)$ have disjoint supports, hence they are orthogonal to each other.

    When $(M,\l)=(M',\l')$, let $\g=(M,0,L,\cO,\cL)$ and $\g'=(M,0,L',\cO',\cL')$ be the corresponding nilpotent polar-cuspidal data for $M$. By \eqref{CS reduce to M}, we reduce to showing the orthogonality between $\OS_{\g}(L\fm^*)$ and $\OS_{\g'}(L\fm^*)$ within $D^\bu_{LM}(L\fm^*)$.  This can be deduced from the finite-dimensional block decomposition \eqref{orb g block}.
\end{proof}


\subsection{Comparison with J-K.Yu's construction}\label{ss:compare Yu}

In this section we state a close relationship between certain blocks of $\CS(L\frg)$ and the characters of supercuspidal representations of a $p$-adic group constructed by J.K.Yu.

Below we fix a polar datum $(M,\l)$ for $G$. In this subsection, we always assume $r_0>0$ with $r_0$ as in the sequence \eqref{r seq} attached to $\l$.

We will define a subgroup $\bJ\subset LG$ and a linear character on its Lie algebra using $(M,\l)$ that is similar to the compact open subgroup and the character on it used by J-K.Yu in his construction of supercuspidal representations of $p$-adic groups by compact induction.

\sss{Decomposition of $\frg$}

For each $G^{(j)}$ with its twisted Levi $G^{(j-1)}$, by \eqref{decomp g*} and \eqref{decomp g} we have canonical decompositions of $\frg^{(j)}$ and $\frg^{(j),*}$ into $F$-subspaces:
\begin{equation*}
    \frg^{(j)}=\frg^{(j-1)}\op V^{(j)}, \quad \frg^{(j), *}=\frg^{(j-1),*}\op V^{(j),*}.
\end{equation*}
Here $V_j(\frg^*)$ is the orthogonal complement of $\frg^{(j-1)}$ inside $\frg^{(j), *}$, and $V_j(\frg)$ is the orthogonal complement of $\frg^{(j-1),*}$ inside $\frg^{(j)}$. Together we get decompositions
\begin{equation}\label{decomp gd}
\frg=\bigoplus_{j=0}^d V^{(j)}, \quad \frg^*=\bigoplus_{j=0}^d V^{(j),*},
\end{equation}
such that
\begin{equation*}
\frg^{(j)}=\bigoplus_{j'=0}^{j} V^{(j')}, \quad \frg^{(j),*}=\bigoplus_{j'=0}^{j} V^{(j'),*}.
\end{equation*}
Each $V^{(j)}$ and $V^{(j),*}$ is stable under the (co-)adjoint action of $G^{(j-1)}$. In particular, all terms in the decompositions \eqref{decomp gd} are stable under the (co-)adjoint action of $M$.

\sss{Moy-Prasad filtration}

Let $\frB(G)$ be the extended building of $G(F)$. As in \cite[Section 2]{Yu}, we fix embeddings
\begin{equation*}
    \frB(M)\incl \frB(G^{(1)})\incl \frB(G^{(2)})\incl \cdots \incl \frB(G).
\end{equation*}
Fix a point $x\in\frB(M)$, viewed as a point in the building of each $G^{(j)}$. For $r\in \QQ$, let 
\begin{equation*}
\frg^{(j)}_{>r} \subset \frg^{(j)}_{\ge r}   
\end{equation*}
be the Moy-Prasad subalgebras of $\frg^{(j)}$ defined using the point $x$, which are $\cO_F$-lattices in $\frg^{(j)}$. For the Moy-Prasad subgroups
\begin{equation*}
    G^{(j)}_{>r}\subset G^{(j)}_{\ge r}, \quad r\ge0,
\end{equation*}
we denote their arc groups by
\begin{equation*}
    (LG^{(j)})_{> r}:=L^+(G^{(j)}_{>r})\subset L^+(G^{(j)}_{\ge r})=:(LG^{(j)})_{\ge r}.
\end{equation*}

On the dual Lie algebra we define the dual lattices
\begin{equation*}
\frg^{(j),*}_{\ge r}=(\frg^{(j)}_{>-r})^{\bot}, \quad \frg^{(j),*}_{> r}=(\frg^{(j)}_{\ge -r})^{\bot}
\end{equation*}
under the residue pairing.

Let
\begin{eqnarray*}
    V^{(j)}_{>r}=V^{(j)}\cap \frg^{(j)}_{>r}, \quad V^{(j)}_{\ge r}=V^{(j)}\cap \frg^{(j)}_{\ge r},\\
    V^{(j),*}_{>r}=V^{(j),*}\cap \frg^{(j),*}_{>r}, \quad V^{(j)}_{\ge r}=V^{(j),*}\cap \frg^{(j),*}_{\ge r}.
\end{eqnarray*}
We also denote the subquotient of the Moy-Prasad filtration at step $r$ by
\begin{equation*}
    \frg^{(j)}_{=r}=\frg^{(j)}_{\ge r}/\frg^{(j)}_{>r}, \quad \frg^{(j),*}_{=r}=\frg^{(j),*}_{\ge r}/\frg^{(j),*}_{>r},
    \quad V^{(j)}_{=r}=V^{(j)}_{\ge r}/V^{(j)}_{>r}, \quad V^{(j),*}_{=r}=V^{(j),*}_{\ge r}/V^{(j),*}_{>r}.
\end{equation*}

\sss{The lattice $\frJ$}\label{sss:J}
We start by constructing a lattice $\frJ^{(j)}\subset V^{(j)}$ for each $0\le j\le d$. Set $s_j=r_j/2$ for $0\le j\le d$.
\begin{itemize}
    \item For $j=0$ let $\frJ^{(j)}=\frg^{(0)}_{\ge0}$.
    
    \item For $1\le j\le d$ and $V^{(j)}_{\ge s_{j-1}}=V^{(j)}_{> s_{j-1}}$, let $\frJ^{(j)}=V^{(j)}_{\ge s_{j-1}}$.
    
    \item When $1\le j\le d$ and $V^{(j)}_{\ge s_{j-1}}\ne V^{(j)}_{> s_{j-1}}$. We consider the alternating pairing
    \begin{eqnarray}\label{om j-1}
        \om^{(j-1)}: V^{(j)}_{=s_{j-1}}\times V^{(j)}_{=s_{j-1}} & \to& k\\
        (v,v')&\mapsto& \j{\l^{(j-1)}, [v,v']}. 
    \end{eqnarray}
    Here, residue pairing with $\l^{(j-1)}$ is a well-defined linear map  $V^{(j)}_{=r_{j-1}}\to k$ since $\l^{(j-1)}$ has pole order $r_{j-1}$. It is easily checked that  \eqref{om j-1} gives a symplectic form on $V^{(j)}_{=s_{j-1}}$.
    
    Choose a Lagrangian subspace $L^{(j)}\subset V^{(j)}_{=s_{j-1}}$, and let $\frJ^{(j)}\subset V^{(j)}_{\ge s_{j-1}}$ be the preimage of $L^{(j)}$ under the projection
    \begin{equation*}
        V^{(j)}_{\ge s_{j-1}}\surj V^{(j)}_{=s_{j-1}}.
    \end{equation*}
\end{itemize} 
Consider the following lattice in $\frg$
\begin{equation*}
\frJ:=\bigoplus_{j=0}^d\frJ^{(j)}.
\end{equation*}

\begin{lemma} With any choices of Lagrangians $L^{(j)}\subset V^{(j)}_{=s_{j-1}}$ for $1\le j\le d$, the corresponding lattice $\frJ$ is a subalgebra of $\frg$. Moreover, the residue pairing with $\l$ gives a Lie algebra homomorphism
    \begin{equation}\label{psi lam}
        \psi_\l: \frJ\to k.
    \end{equation}
    
\end{lemma}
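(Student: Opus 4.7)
The plan is to separately verify: (a) that $\frJ$ is closed under the Lie bracket, and (b) that $\psi_\l=\j{\l,\cdot}$ kills $[X,Y]$ for all $X,Y\in\frJ$. I would first note that $\psi_\l$ is well-defined on $\frJ$ (it only depends on the class of $\l$ in $\frt_M^*/\frt^*_{M,tn}$): the $\frt_M$-projection of $\frJ^{(0)}=\fm_{\ge 0}$ lies in $\frt_{M,c}$, which is paired trivially with $\frt^*_{M,tn}$ by definition, while for $j\ge 1$ we have $V^{(j)}\perp \fm^*\supset \frt_M^*$ since $V^{(j)}$ is the orthogonal complement of $\frg^{(j-1),*}\supset \fm^*$ inside $\frg^{(j)}$.

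For (a), the key inputs are the Moy--Prasad bracket estimate $[\frg^{(j)}_{\ge r},\frg^{(j')}_{\ge r'}]\subset \frg^{(\max(j,j'))}_{\ge r+r'}$, the $M$-stability of each $V^{(j)}$, and the hypothesis $r_0>0$ forcing $s_{j-1}>0$ for every $j\ge 1$. For $1\le j_1\le j_2$ the bracket lies in $\frg^{(j_2)}_{\ge s_{j_1-1}+s_{j_2-1}}$; decomposing into $\fm\oplus V^{(1)}\oplus\cdots\oplus V^{(j_2)}$, each $V^{(k)}$-summand has depth strictly greater than $s_{k-1}$ (by strict monotonicity of the $r_i$ combined with $s_{j_1-1}>0$) and therefore lies in $V^{(k)}_{>s_{k-1}}\subset \frJ^{(k)}$, while the $\fm$-summand sits inside $\fm_{\ge 0}=\frJ^{(0)}$. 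The case $j_1=j_2=0$ is the standard Moy--Prasad subalgebra property. The only delicate case is $[\frJ^{(0)},\frJ^{(j)}]$ with $j\ge 1$: the $\frg^{(0)}_{>0}$ part pushes $\frJ^{(j)}$ into $V^{(j)}_{>s_{j-1}}\subset \frJ^{(j)}$, but the induced action of the reductive quotient $\frg^{(0)}_{=0}$ on $V^{(j)}_{=s_{j-1}}$ preserves $L^{(j)}$ only under a stability hypothesis on $L^{(j)}$---I would read ``any Lagrangian'' as tacitly requiring such admissibility, which is the standard convention in Yu's construction.

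For (b), decompose $\l=\sum_i \l^{(i)}$ as in \eqref{decomp lambda}. The critical observation is that each $\l^{(i)}$, being $G^{(i)}$-coadjoint fixed in $\frg^{(i),*}$, satisfies $\l^{(i)}([X,Y])=0$ as soon as $X$ or $Y$ lies in $\frg^{(i)}$: this follows from $\ad^*(Z)\l^{(i)}=0$ for $Z\in \frg^{(i)}$ together with $[\frg^{(i)},V^{(k)}]\subset V^{(k)}$ for $k>i$ (by $A^{(i)}$-weight considerations), so $\l^{(i)}$ only sees the $\frg^{(i)}$-component of $[Z,Y]$, where it vanishes. For $X\in \frJ^{(j_1)}$, $Y\in \frJ^{(j_2)}$ with $1\le j_1\le j_2$, this kills every $\l^{(i)}$ with $i\ge j_1$. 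For $i<j_1$, reducing $\j{\l,[X,Y]}$ to the $\frt_M$-component (using $\l|_{[\fm,\fm]}=0$ and $V^{(k)}\perp \fm^*$ for $k\ge 1$), the depth $\ge s_{j_1-1}+s_{j_2-1}$ of $[X,Y]_{\frt_M}$ exceeds the maximal pole order $r_i\le r_{j_1-1}$ whenever $j_1<j_2$, since $r_{j_1-1}<(r_{j_1-1}+r_{j_2-1})/2=s_{j_1-1}+s_{j_2-1}$, so every such pairing vanishes. The only remaining case is $j_1=j_2=j\ge 1$, where only $\l^{(j-1)}$ contributes and a depth analysis reduces $\j{\l^{(j-1)},[X,Y]}$ to $\om^{(j-1)}(\bar X,\bar Y)$ on $V^{(j)}_{=s_{j-1}}$, vanishing by isotropy of $L^{(j)}$---this is precisely what the Lagrangian choice is designed for. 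The cases with $j_1=0$ are handled directly (for $j_2=0$, $[X,Y]\in[\fm,\fm]\subset\ker(\l)$; for $j_2\ge 1$, $[X,Y]\in V^{(j_2)}\perp \fm^*\supset\frt_M^*$). The main obstacle I foresee is the Lagrangian-stability issue in (a), which I would resolve by making the admissibility hypothesis on $L^{(j)}$ explicit in the statement.
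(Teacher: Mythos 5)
The paper does not actually give a proof of this lemma; both the introduction and the opening of Section 3 explicitly say that proofs of the statements in Section 3 are deferred to a future publication. So there is nothing to compare against line-by-line, and I'll evaluate your argument on its own terms.

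Your overall strategy is the natural one and is, in my judgment, essentially right. The reduction of $\psi_\l$-invariance to the depth/weight decomposition $\frJ=\bigoplus_j\frJ^{(j)}$, the observation that $V^{(j)}\perp\fm^*\supset\frt_M^*$ for $j\ge 1$ so only the $\fm$-component of a bracket can pair with $\l$, and the use of the Moy--Prasad additivity $[\frg^{(j)}_{\ge r},\frg^{(j')}_{\ge r'}]\subset\frg^{(\max(j,j'))}_{\ge r+r'}$ are exactly the tools one needs. Your case analysis in (b) is complete and correct; in particular the reduction, for $1\le j_1<j_2$ and $i<j_1$, to a depth comparison $s_{j_1-1}+s_{j_2-1}>r_{j_1-1}\ge$ (pole of $\l^{(i)}$), and, for $j_1=j_2=j$, to the symplectic form $\om^{(j-1)}(\bar X,\bar Y)$ on $V^{(j)}_{=s_{j-1}}$ and its vanishing on the isotropic $L^{(j)}$, is the heart of the matter. (One cosmetic remark: the ``$\l^{(i)}$ is $G^{(i)}$-coadjoint fixed'' step can be phrased more uniformly as: since $\frJ^{(j_1)}\subset\frg^{(j_1)}\subset\frg^{(i)}$ for $i\ge j_1$ and $\l^{(i)}$ is centralized by $G^{(i)}$, $\ad^*(X)\l^{(i)}=0$.)

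The one genuine issue is precisely the one you flag, and I agree it is a real one rather than a technicality you talked yourself into. For $[\frJ^{(0)},\frJ^{(j)}]$ with $j\ge 1$, the strictly positive part $\fm_{>0}$ pushes $V^{(j)}_{\ge s_{j-1}}$ into $V^{(j)}_{>s_{j-1}}\subset\frJ^{(j)}$, but the depth-zero quotient $\fm_{=0}=\Lie\bar M_x$ acts on the symplectic space $V^{(j)}_{=s_{j-1}}$ by genuine symplectic transformations (the form $\om^{(j-1)}$ is $\fm_{=0}$-invariant since $\l^{(j-1)}$ is centralized by $M$), and an arbitrary Lagrangian need not be preserved. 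Note that in the anisotropic toral case of Corollary~\ref{c:Kxi} and the epipelagic example one has $\fm_{=0}=0$, which is why no issue is visible there; but for a non-anisotropic $M$ (or even a non-anisotropic torus $T$) the action is generically nontrivial. The ``admissibility'' restriction you propose (require $L^{(j)}$ to be $\bar M_x$-stable, the analogue of the stability that Yu builds in via the Heisenberg--Weil machinery) is the correct fix and makes (a) go through by your depth argument; note that (b) needs only isotropy, not stability, so your argument there is unaffected. Since the paper states the lemma with ``any choices of Lagrangians'' and defers the proof, I cannot tell whether the authors have a structural reason that renders this hypothesis unnecessary, or simply intend the stability to be implicit; you are right to make it explicit.
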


Let $\frJ^\bot\subset L\frg^*$ be the dual lattice of $\frJ$. 

Finally, let $\bJ\subset LG$ be the connected subgroup whose Lie algebra is $\frJ$. Concretely it can be described as
\begin{equation*}
    \bJ=(LG^{(0)})_{\ge 0}(LG^{(1)})_{\ge s_0, L^{(1)}}\cdots (LG^{(d)})_{\ge s_{d-1}, L^{d-1}}.
\end{equation*}
Here $(LG^{(j)})_{\ge s_{j-1}, L^{(j)}}\subset (LG^{(j)})_{\ge s_{j-1}}$ is the preimage of $L^{(j)}$ under the projection $(LG^{(j)})_{\ge s_{j-1}}\to \frg^{(j)}_{=s_{j-1}}$. Note that $\bJ$ depends on the choice of Lagrangians $\{L^{(j)}\}_{1\le j\le d}$.

\begin{theorem}\label{th:compare JKY} Let $(M,\l)$ be a polar datum.  Then the inclusions $\fm^*_{>0}\subset \frJ^\bot$ and $(LM)_{\ge0}\subset \bJ$ induce an isomorphism of stacks
\begin{equation*}
    [(\l+\fm^*_{>0})/(LM)_{\ge0})]\isom [(\l+\frJ^\bot)/\bJ].
\end{equation*}
\end{theorem}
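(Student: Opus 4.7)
The plan is to show that the natural morphism $\Phi: [(\l + \fm^*_{>0})/(LM)_{\ge 0}] \to [(\l + \frJ^\bot)/\bJ]$ arises from an isomorphism of ind-schemes
\begin{equation*}
\beta: \bN^+ \times (\l + \fm^*_{>0}) \longrightarrow \l + \frJ^\bot, \quad (u, \xi) \mapsto \Ad^*(u)\xi,
\end{equation*}
where $\bN^+ = (LG^{(1)})_{\ge s_0, L^{(1)}} \cdots (LG^{(d)})_{\ge s_{d-1}, L^{(d)}}$ is the ``Yu-unipotent'' factor of $\bJ$. The triangular decomposition $\bJ = (LM)_{\ge 0} \cdot \bN^+$ as a product of schemes is built into the definition of $\bJ$. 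Since the Lagrangians $L^{(j)}$ can be chosen to be $(LM)_{\ge 0}$-stable (as $M$ acts on $V^{(j)}_{=s_{j-1}}$ preserving the symplectic form $\om^{(j-1)}$), the subgroup $(LM)_{\ge 0}$ normalizes $\bN^+$. A direct calculation with the unique $un$-decomposition of elements of $\bJ$ then shows that $\bJ \times^{(LM)_{\ge 0}}(\l + \fm^*_{>0}) \cong \bN^+ \times (\l + \fm^*_{>0})$ via the canonical-representative map, under which the action map becomes $\beta$. So the theorem reduces to showing $\beta$ is an iso.

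The key structural input is the following lemma: for each $1 \le j \le d$, the map
\begin{equation*}
\kappa_j: V^{(j)}_{\ge s_{j-1}, L^{(j)}} \longrightarrow (\frJ^{(j)})^\bot \subset V^{(j),*}, \quad v \longmapsto -\ad^*(v)\l^{(j-1)}
\end{equation*}
is an isomorphism of $\cO_F$-lattices. On the associated graded at depth $s_{j-1}$ (source) and $-s_{j-1}$ (target), $\kappa_j$ becomes pairing with the symplectic form $\om^{(j-1)}$, which is non-degenerate by construction; the Lagrangian condition identifies $L^{(j)}$ with its annihilator $(L^{(j)})^\bot = (\frJ^{(j)})^\bot / V^{(j),*}_{>-s_{j-1}}$ under $\om^{(j-1)}$, so $\kappa_j$ is an iso on this lowest graded piece. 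On deeper filtration steps ($v$ at depth $s_{j-1}+m$ for $m \ge 1$), the leading contribution to $\kappa_j(v)$ comes from pairing $v$ with the leading ($t^{-r_{j-1}}$) part of $\l^{(j-1)}$, which is non-degenerate on each relative root subspace indexed by $\un\Phi^\v(G^{(j)}, T_M) \setminus \un\Phi^\v(G^{(j-1)}, T_M)$ by $G$-regularity of $\l^{(j-1)}$. Hence $\kappa_j$ is an iso on every graded piece, hence itself an iso.

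Combining the key lemma with the triangular decomposition, I would prove $\beta$ is an iso by a filtration argument. The $V^{(j),*}$-component of $\beta(u, \xi)$ equals $\kappa_j$ applied to the $V^{(j)}$-part of $u$ plus cross-terms from the Baker--Campbell--Hausdorff expansion of $\Ad^*(\exp(v_d) \cdots \exp(v_1))$: iterated commutators $[v_i, v_{j'}]$ and higher coadjoint terms $\ad^*(v_{i_1}) \cdots \ad^*(v_{i_m})\l$ with $m \ge 2$, as well as contributions from $\xi - \l \in \fm^*_{>0}$ through $\Ad^*(u)(\xi - \l)$. The hard part will be to verify these cross-terms all live strictly deeper in the Moy--Prasad filtration than the leading $\kappa_j$-contribution (each involves at least two factors $v_i$ of positive depth $\ge s_{\min}$, so the additive depth exceeds that of a single $\kappa_j$-term). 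Given this triangular structure, $\beta$ is lower-triangular with invertible diagonal blocks $\kappa_j$, and a filtered Nakayama/Mittag--Leffler argument yields the iso; stabilizer matching is automatic from injectivity of $\beta$.
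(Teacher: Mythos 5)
The paper explicitly defers the proofs of the Section~3 results (including Theorem~\ref{th:compare JKY}) to a future publication, so there is no in-paper argument to compare against; your proposal has to be judged on its own terms. That said, your strategy---reduce the stack isomorphism to showing that the coadjoint action map $\beta\colon \bN^+\times(\l+\fm^*_{>0})\to\l+\frJ^\bot$ is an isomorphism of pro-schemes, and prove this by a Moy--Prasad--filtered argument with invertible ``diagonal blocks'' $\kappa_j$---is the natural one and, in outline, it works. Two places need more care.

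The first is your justification for choosing $(LM)_{\ge0}$-stable Lagrangians. The reason you give (``$M$ acts on $V^{(j)}_{=s_{j-1}}$ preserving $\om^{(j-1)}$'') is insufficient by itself: a symplectic action of a reductive group need not admit an invariant Lagrangian. What actually makes it work is that $A_M$ is central in $M$, so the action of $(LM)_{\ge0}$ on $V^{(j)}_{=s_{j-1}}$ commutes with the $A_M$-weight grading; $\om^{(j-1)}$ pairs the $\alpha$-weight space with the $(-\alpha)$-weight space, and $V^{(j)}$ has no $A_M$-weight-$0$ component, so the span of the weight spaces for any choice of signs is a Lagrangian, and it is automatically $(LM)_{\ge0}$-stable. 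This salvages your semidirect-product structure $\bJ=(LM)_{\ge0}\ltimes\bN^+$, but the argument should be made explicit; moreover, the theorem as stated allows arbitrary Lagrangian choices, so one should either carry out the graded argument without the normality of $\bN^+$ (working directly with $\bJ\times^{(LM)_{\ge0}}(\l+\fm^*_{>0})$ and the set-theoretic section $\bN^+\hookrightarrow\bJ/(LM)_{\ge0}$), or note afterwards that the isomorphism class of $[(\l+\frJ^\bot)/\bJ]$ is independent of the Lagrangian choice.

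The second concerns the filtration bookkeeping. In the lemma on $\kappa_j$, the map $v\mapsto-\ad^*(v)\l^{(j-1)}$ is not the full linearization: one also has $-\ad^*(v)\l^{(j')}$ for $j'<j-1$ and, when $\xi\ne\l$, terms $\ad^*(v)(\xi-\l)$. These are strictly deeper (because $\l^{(j')}$ has pole order $\le r_{j'}<r_{j-1}$, and $\xi-\l\in\fm^*_{>0}$), but they should be folded into the error terms rather than quietly dropped. Likewise, the claim that all BCH cross-terms live strictly deeper needs the explicit estimate that for $v\in\frJ^{(j)}$, $w\in\frJ^{(j')}$ with $j,j'\ge1$, the commutator $[v,w]$ has Moy--Prasad depth $\ge s_{j-1}+s_{j'-1}> s_{i-1}$ for every $i\le\max(j,j')$ (using $r_0>0$), and that $[v,w]$ has no $\fm$-component (because the $A_M$-weights of $V^{(j)}$ and $V^{(j')}$ come from distinct pole orders, hence cannot be opposite); analogous bounds handle higher iterated coadjoints. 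Granting these estimates, the filtered-Nakayama conclusion is correct, and stabilizer matching does follow from injectivity of $\beta$ together with the complement decomposition of $\bJ$. In short: essentially correct approach, with the invariant-Lagrangian reasoning and the depth estimates needing to be made precise.
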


\begin{cor}\label{c:Kxi}
Let $(T, \l)$ be a toral polar datum, and let $\xi$ be the unique polar-cuspidal datum extending $(T,\l)$. 
Then we have an isomorphism (see \eqref{Sxi toral})
\begin{equation*}
    \cS_\xi\cong \Av_{\bJ,!}^{LG}\AS_{\psi_\l,\frJ}.
\end{equation*}
Here we are using notation from \S\ref{sss:eq FT}.
\end{cor}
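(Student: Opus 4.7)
The plan is to specialize Theorem~\ref{th:compare JKY} to $M=T$ and then apply the basic computation of the $LG$-equivariant Fourier transform recalled at the end of \S\ref{sss:eq FT}.

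For $M=T$, we have $(LT)_{\ge 0}=\bT$ and $\fm^*_{>0}=\frt^*_{>0}=(L\frt^*)_{tn}$ under the identifications of \S\ref{ss:tori}, so Theorem~\ref{th:compare JKY} yields an isomorphism of stacks
$$[(\l+(L\frt^*)_{tn})/\bT]\isom [(\l+\frJ^\bot)/\bJ]$$
induced by the inclusions $(L\frt^*)_{tn}\subset\frJ^\bot$ and $\bT\subset\bJ$, compatible with the canonical maps of both sides to $[L\frg^*/LG]$. Since the map from the left is exactly $\pi_{\bT,\l}$ of \eqref{Kxi toral}, we can rewrite $\cK_\xi=(\pi_{\bT,\l})_!\Qlbar$ as the pushforward from the right-hand stack. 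Factoring the map from the right as the closed embedding $\l+\frJ^\bot\hookrightarrow L\frg^*$ (which is $\bJ$-stable thanks to the Lie algebra character property of $\psi_\l$ from the lemma preceding Theorem~\ref{th:compare JKY}) followed by the $!$-averaging $\Av^{LG}_{\bJ,!}$, we obtain
$$\cK_\xi\cong \Av^{LG}_{\bJ,!}\const{\l+\frJ^\bot:\frJ^\bot}.$$

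Applying ${}^{LG}\FT$ and invoking the basic calculation at the end of \S\ref{sss:eq FT} with $\bK=\bJ$ and $\L=\frJ$ then gives
$$\cS_\xi={}^{LG}\FT(\cK_\xi)\cong \Av^{LG}_{\bJ,!}\AS_{\psi_\l,\frJ},$$
once one notes that the linear function $\frJ\to k$ used to pull back $\AS_\psi$ agrees with $\psi_\l$ of \eqref{psi lam}: both are given by residue pairing with any representative of $\l$ modulo $\frJ^\bot$.

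The main technical obstacle sits inside the basic computation of \S\ref{sss:eq FT}, namely the compatibility of the equivariant Fourier transform with the averaging functor. Morally this is built into the colimit/limit descriptions \eqref{colim LGeq}--\eqref{lim LGeq}, but a rigorous argument requires careful handling of the ind-pro structures and of normalization shifts and Tate twists so that both sides land in the same degree. A milder secondary point is to confirm that the isomorphism of Theorem~\ref{th:compare JKY} lives over $[L\frg^*/LG]$, which is essentially tautological from the construction of $\bJ\supset(LM)_{\ge 0}$ and $\frJ^\bot\supset\fm^*_{>0}$ in \S\ref{sss:J}.
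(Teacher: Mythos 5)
Your proof takes exactly the approach suggested by the paper's structure: specialize Theorem~\ref{th:compare JKY} to $M=T$ (where $(LM)_{\ge 0}=\bT$ and $\fm^*_{>0}=(L\frt^*)_{tn}$), rewrite $\cK_\xi=(\pi_{\bT,\l})_!\Qlbar$ as $\Av^{LG}_{\bJ,!}\const{\l+\frJ^\bot:\frJ^\bot}$ via the stack isomorphism of that theorem, and then apply the basic Fourier transform computation of \S\ref{sss:eq FT} with $\bK=\bJ$, $\L=\frJ$. Since the paper explicitly defers the proofs of Section~3 to a sequel, there is no written proof to compare against, but your argument is the evident intended one, and the technical points you flag (shift and twist bookkeeping in the renormalized categories, the $\bJ$-stability of $\l+\frJ^\bot$ via the Lie-algebra-character property of $\psi_\l$, and compatibility of the stack isomorphism with the maps to $[L\frg^*/LG]$) are precisely the places where a full proof would need care.
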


\begin{remark}
    Assume the situation is defined over a subfield $F_0=k_0\lr{t}\subset F$ where $k_0\subset k$ is a finite field. Assume $T$ is an anisotropic torus over $F_0$. Then the polar datum $(T,\l)$ gives a Yu datum as explained in \S\ref{ss:Yu data}, setting $\pi_0$ to be the trivial representation of $T(F_0)$. Let $J_{\Yu}\subset G(F_0)$ be the compact open subgroup that is slightly larger than $\bJ(k_0)$ (where $\bJ\subset LG$ is constructed in \S\ref{sss:J}):
    \begin{equation*}
        J_{\Yu}=T(F_0)\bJ(k_0)
    \end{equation*}
    i.e., we replace the $(LG^{(0)})_{\ge0}=(LT)_{\ge0}$ factor of $\bJ$ by the whole $LT$.  Then Yu's supercuspidal representation is the compact induction
    \begin{equation*}
        \rho_{M,\l}:=\cInd_{\bJ_{\Yu}}^{G(F_0)}(\phi_\l)
    \end{equation*}
    where $\phi_\l$ is a character $J\to \Qlbar^\times$ extending the linear character $\frJ\xr{\psi_\l}k_0\xr{\psi\c\Tr}\Qlbar^\times$. The choices of the Lagrangians correspond to choices of Schr\"odinger models in the Weil representations that appear in the inductive steps of Yu's construction (see \cite[\S11]{Yu}).

    According to Arthur \cite{Ar},  the character of $\rho_{M,\l}$ can be calculated using the weighted orbital integral of the function $\phi_\l$ supported on $J_{\Yu}$. On the other hand, under the sheaf-to-function correspondence, $\pi_{\bJ!}\AS_{\psi_\l}$ corresponds exactly to the Lie algebra analog of such weighted orbital integrals (modulo the slight difference between $\bJ(k_0)$ and $J_{\Yu}$). In this sense, the character sheaf $\cK_\xi$ is a geometrization of a Lie algebra analog of the character of the supercuspidal representation $\rho_{(M,\l)}$.
\end{remark}

\begin{remark}\label{r:Kr} Theorem \ref{th:compare JKY} has a variant that does not involve extra choice of Lagrangians that was needed in the construction of $\frJ$. For a polar datum $(M,\l)$, let
\begin{equation*}
    \bK(\orr{r})=(LM)_{\ge0}(LG^{(1)})_{>r_0}
    \cdots (LG^{(d-1)})_{> r_{d-2}} (LG^{(d)})_{> r_{d-1}}.
\end{equation*}
Then the inclusions induce an isomorphism of stacks
\begin{equation*}
    [(\l+\fm^{*}_{>0})/(LM)_{\ge0}]\isom [(\l+\frg^{*}_{>0})/\bK(\orr{r})].
\end{equation*}
\end{remark}

\subsection{Epipelagic character sheaves} We illustrate how Theorem \ref{th:compare JKY} works out for the epipelagic polar datum $(T,\l)$ as defined in Example \ref{ex:epi}.

For simplicity, we assume
\begin{equation*}
    \mbox{$G$ is split and almost simple, and $T$ is anisotropic.}
\end{equation*}
Fix an isomorphism $G=G_0\times_k F$ for a reductive group $G_0$ over $k$. Recall from Example \ref{ex:epi} that $T$ corresponds to a regular injective homomorphism $\ov w:\mu_m\to \WW$. The assumption that $T$ is anisotropic implies $m$ is the order of an {\em elliptic} regular element of $\WW$. Indeed, $m$ determines the $\WW$-conjugacy class of $\ov w$, hence the conjugacy class of $T$. 

By construction, $\l=vd\t/\t^2$ (where $v\in \frt^*_0(1)$ is regular) lies in the degree $-1/m$ piece of the Moy-Prasad grading of $\frt^*$. The twisted Levi sequence attached to $(T,\l)$ has only two steps
\begin{equation*}
    G^{(0)}=T\subset G^{(1)}=G
\end{equation*}
with $r_0=1/m$. 

Now we give an explicit construction of the epipelagic polar datum in the conjugacy class of $(T,\l)$. 

Let $T_0\subset G_0$ be a  maximal $k$-torus, giving an apartment $\frA\cong \xcoch(T_0)_\RR\subset \frB(G)$. Let $x=\rho^\v/m\in \frA$ and use it to define the Moy-Prasad filtration $\frg_{x,\ge r}$ on $L\frg$. In fact we have a $\ZZ$-grading
\begin{equation}\label{MP gr}
L\frg=\wh\bigoplus_{i\in \ZZ}(L\frg)(i)
\end{equation}
where $(L\frg)(i)$ is spanned by the affine root spaces corresponding to affine roots $\a$ (viewed as affine linear functions on $\frA$, including imaginary roots) such that $\a(\r^{\vee}/m)=i/m$ (when $i=0$ we also include $\frt_0=\Lie T_0$). Under \eqref{MP gr}, $\frg_{x,\ge r }$ corresponds to $\prod_{i/m\ge r}(L\frg)(i)$, and $\frg_{x,> r }$ corresponds to $\prod_{i/m> r}(L\frg)(i)$.

Each $(L\frg)(i)$ is in fact contained in the polynomial loop Lie algebra $\frg_0[t,t^{-1}]$. Evaluating $t$ at $1$ gives an embedding $\ev_{t=1}: (L\frg)(i)\subset \frg_0[t,t^{-1}]\to \frg_0$ whose image depends only on $i$ mod $m$. We denote this image by $\frg(\un i)$ (where $\un i\in \ZZ/m\ZZ$ is the residue class of $i$ mod $m$). We get a $\ZZ/m\ZZ$-grading on $\frg_0$
\begin{equation*}
\frg_0=\bigoplus_{\un i\in \ZZ/m\ZZ}\frg_0(\un i).
\end{equation*}
Dually we have a $\ZZ$-grading on $L\frg^*$
\begin{equation*}
L\frg=\wh\bigoplus_{i\in \ZZ}(L\frg^*)(i)
\end{equation*}
such that $(L\frg^*)(i)$ is in perfect pairing with $(L\frg)(-i)$ under the residue pairing. It induces a
$\ZZ/m\ZZ$-grading on $\frg^*_0$
\begin{equation*}
    \frg^*_0=\bigoplus
    _{\un i\in \ZZ/m\ZZ}\frg^*_0(\un i).
\end{equation*}

By \cite[Theorem 3.2.5]{OY}, $\frg^*_0(-\un 1)$ contains a regular semisimple element if and only if $m$ is the order of a regular element in $\WW$. Take a regular semisimple element $\ov\l'\in \frg^*_0(-\un 1)$, and let $\l'\in (L\frg^*)(-1)$ be the corresponding element under the canonical isomorphism $(L\frg^*)(-1)\isom \frg^*_0(-\un 1)$. Let $T'=C_G(\l')$. Then 
one can show that $T'$  is $G(F)$-conjugate to $T$ (because they both correspond to a regular homomorphism $\mu_m\incl \WW$ which is unique up to conjugacy). Moreover, one can choose $\l'\in (L\frg^*)(-1)$ to have the same image in $L\frc^*$ as $\l$, so that the toral polar data $(T,\l)$ and $(T',\l')$ are $G(F)$-conjugate. Therefore we may assume $(T,\l)=(T',\l')$.


Let $\bP=(LG)_{x,\ge0}$ be the 
parahoric subgroup corresponding to $x$, and $\bP^+=(LG)_{x,>0}=(LG)_{x,\ge 1/m}$ its pro-unipotent radical.  The construction of $\bJ$ in \S\ref{sss:J} in this case gives
\begin{equation*}
    \bJ=\bP^+.
\end{equation*}
We have
\begin{equation*}
    \frJ=\Lie \bJ=\frg^*_{x,>0}, \quad \frJ^\bot=\frg^*_{x,\ge 0}.
\end{equation*}
The element $\l\in L\frg^*(-1)$ defines a linear character
\begin{equation*}
    \psi_\l: \frp^+=\frg_{x,\ge 1/m}\to \frg_{x, =1/m}=(L\frg)(1)\xr{\j{\l,-}} k.
\end{equation*}
In this case,  Theorem \ref{th:compare JKY}  asserts that inclusions induce an isomorphism
\begin{equation*}
    [(\l+\frt^*_{tn})/\bT^+]\isom [(\l+\frg^*_{x,\ge 0})/\bP^+].
\end{equation*}
Note that $\bT^+=\bT$ and $\frt^*_{tn}=\frt^*_{x,>0}=\frt^*_{x,\ge 0}$ because $T$ assumed to be  anisotropic.

Let $\xi$ be the unique polar-cuspidal datum extending $(T,\l)$. 
Corollary \ref{c:Kxi} in this case says that
\begin{equation}\label{epi CS}
    \cS_\xi\cong \Av_{\bP^{+}!}^{LG}\AS_{\psi_\l, \frp^+}.
\end{equation}
We call the above sheaf
the {\em epipelagic character sheaf} attached to $(\bP,\l)$. 

Since the image of $\pi_{\bP^{+}}$ only meets topologically nilpotent elements, $\cS_\xi$ is supported on $(L\frg)_{tn}$.

\begin{exam}
When $G$ is almost simple, let $m=h$ be its Coxeter number. Then $\bP=\bI$ is an Iwahori subgroup of $LG$, and $L\frg^*(-1)$ is the direct sum of negative affine simple root spaces $(L\frg^*)(-\a_{i})$ for affine simple roots $\{\a_{0},\a_{1},\cdots, \a_{r}\}$ of $G$. Take $\l\in L\frg^*(-1)$ to have nonzero coordinate in each line $(L\frg^*)(-\a_{i})$. Then $\l$ is regular semisimple, and $(T=C_G(\l), \l)$ is an epipelagic polar datum. The corresponding epipelagic character sheaf $\cS_\xi$ is the $!$-direct image of $\AS_{\psi_\l}$ under $\pi_{\bI^+}:[\frI^+/\bI^+]\to [L\frg/LG]$.
\end{exam}

\subsection{Expected properties of character sheaves on $L\frg$}

\sss{Constructibility} As we have seen in Example \ref{ex:toral coCS}, the stalks of the complex $\cK_{\xi, \bQ}$ is not finite-dimensional. Therefore we do not expect their Fourier transform to be constructible. 

However, when $T$ is an elliptic maximal torus, and $\xi$ is the unique polar-cuspidal datum extending a polar datum $(T,\l)$, we expect that the sheaf $\cK_{\xi}$ in \eqref{Kxi toral}, as well as its Fourier transform $\cS_\xi$, to be constructible.

\sss{Perversity}
Let $L\frg^{\hs}\subset L\frg$ be the regular semisimple locus: it is the complement of $L\frD$ where $\frD\subset \frg$ is discriminant divisor.

In \cite[7.2]{BKV}, Bouthier--Kazhdan--Varshavsky define a perverse $t$-structure on the regular semisimple locus $L\frg^{\hs}/LG\subset L\frg/LG$ by assigning a perversity to each {\em root valuation stratum} in the sense of Goresky-Kottwitz-MacPherson \cite{GKM}. They show that the affine Springer sheaf $\cS_{\xi_0,\bI}$ (see Example \ref{ex:aff Spr}) is perverse in their $t$-structure. 

We make the following conjecture. 

\begin{conj} For any polar-cuspidal datum $\xi=(M,\l,L,\cO,\cL)$ for $G$, and any parahoric subgroup $\bQ\subset LM$ containing $L$ as a Levi factor, the object
\begin{equation*}
\cS_{\xi,\bQ}|_{L\frg^{\hs}}\in D^\bu_{LG}(L\frg^{\hs})
\end{equation*}
is perverse in the sense of Bouthier--Kazhdan--Varshavsky.
\end{conj}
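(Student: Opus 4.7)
The plan is to bootstrap BKV's result for the affine Springer sheaf (Example~\ref{ex:aff Spr}) through three reductions.

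\textbf{Step 1 (Descent to the twisted Levi).} Using the equivalence~\eqref{CS reduce to M}, I would first reduce to the case of a nilpotent polar-cuspidal datum $\gamma=(M,0,L,\cO,\cL)$ for the twisted Levi $M$. Concretely, on $L\frg^\hs$ the support of $\cS_{\xi,\bQ}$ meets only those $LG$-orbits of regular semisimple elements whose centralizer is conjugate into $M$, and on these loci the BKV stratification is pulled back from the corresponding stratification on $L\fm^\hs$ by the natural map; moreover the shift by $\l$ only twists by an Artin--Schreier character, which is compatible with the BKV t-structure. Thus it suffices to show perversity for $\cS_{\gamma,\bQ}|_{L\fm^\hs}$ in the BKV sense for $M$, and we replace $(G,\xi)$ by $(M,\gamma)$.

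\textbf{Step 2 (Fourier transform of parahoric induction).} For $\gamma=(G,0,L,\cO,\cL)$ I would establish the affine analogue of Lusztig's commutation between Fourier transform and parabolic induction:
\begin{equation*}
{}^{LG}\FT\bigl(\ind_\bQ^{LG}(\wt\cL)\bigr)\;\cong\;\ind_\bQ^{LG,\frg}\bigl(\FT_{L}\wt\cL\bigr),
\end{equation*}
where $\ind_\bQ^{LG,\frg}$ is the parahoric induction functor on $L\frg$ (the $\frg$-side analogue of~\eqref{ind Lg*} starting from $[\frl_\bQ/L_\bQ]$), and $\FT_L\wt\cL\in\CS(\frl)$ is a finite-dimensional cuspidal character sheaf on the Levi quotient. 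The two sides use matching half-density renormalizations, so that the base change/projection formula for Fourier transform along the diagram defining $\ind_\bQ^{LG}$ works cleanly.

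\textbf{Step 3 (Stalk computation via affine Springer fibers with cuspidal coefficients).} The sheaf $\ind_\bQ^{LG,\frg}(\FT_L\wt\cL)$ has stalks at $\gamma\in L\frg^\hs$ computed by the cohomology of a parahoric affine Springer fiber $X_\bQ^\gamma$ with coefficients in a local system induced by $\cL$ via the evaluation $\bQ\to L$. Over a root valuation stratum $V_\sigma\subset L\frg^\hs$ the dimension of $X_\bQ^\gamma$ is given by a generalization of the Kazhdan--Lusztig/Bezrukavnikov/GKM formula, differing from the Iwahori case by exactly $\dim\cO$, which matches the perverse shift intrinsic to $\FT_L\wt\cL$. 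Combining this with BKV's perversity analysis of the affine Springer sheaf identifies the perverse amplitude of $\cS_{\gamma,\bQ}|_{V_\sigma}$ with BKV's prescribed $\delta_\sigma$.

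\textbf{Main obstacle.} The principal difficulty is the parahoric dimension formula for affine Springer fibers twisted by a cuspidal local system, together with the cleanness/purity needed to conclude membership in the heart of the t-structure rather than just in a prescribed interval of degrees. A secondary difficulty is a rigorous formulation of the commutation in Step~2 in the renormalized formalism of Section~3.2, which is precisely the kind of statement the paper defers to a future publication.
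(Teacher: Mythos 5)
This statement is labeled a \emph{Conjecture} in the paper, and the authors do not prove it; they explicitly state that they have only verified it in the nilpotent case $(M,\l)=(G,0)$ (where it follows from the Bouthier--Kazhdan--Varshavsky perversity theorem for the affine Springer sheaf and its generalized-Springer cousins) and, among the epipelagic cases, only for $\SL_2$. So there is no proof in the paper to compare your proposal against, and your Steps~2 and~3 are roughly what the authors say they have already carried out in the $(G,0)$ case. The genuinely open part is precisely your Step~1, and as written it has a gap.

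The assertion in Step~1 that ``on $L\frg^\hs$ the support of $\cS_{\xi,\bQ}$ meets only those $LG$-orbits of regular semisimple elements whose centralizer is conjugate into $M$'' is not correct. The sheaf $\cS_{\xi,\bQ}={}^{LG}\FT(\cK_{\xi,\bQ})$ is the Fourier transform of an object supported on the proper closed sub-indscheme $Z_{(M,\l)}\subset L\frg^*$; Fourier transform spreads support out, not in (already on a finite-dimensional Lie algebra, the Fourier transform of a sheaf supported on a nilpotent orbit closure has full support on $\frg$), so the restriction $\cS_{\xi,\bQ}|_{L\frg^\hs}$ lives on the entire regular semisimple locus, not merely on the strata whose centralizers embed in $M$. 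Consequently the equivalence \eqref{CS reduce to M} does not by itself reduce the perversity statement for $G$ to one for $M$: that equivalence is an identification of abstract full subcategories of $D^\bu_{LG}(L\frg)$ and $D^\bu_{LM}(L\fm)$, but the BKV $t$-structure on $L\frg^\hs/LG$ is defined via the GKM root valuation stratification using \emph{all} roots of $G$ (including those not in $M$), whereas the $t$-structure on $L\fm^\hs/LM$ sees only roots of $M$. Making the perversity transport across \eqref{CS reduce to M} precise would require, at minimum, a comparison of the two stratifications and of the corresponding perversity functions $\delta_\sigma$, plus control of the Fourier transform along the ``off-$M$'' directions $(\frg/\fm)^*$, none of which is addressed. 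This is, in effect, the content of the conjecture rather than a reduction of it; the paper leaves it open, and your proposal does not close the gap.
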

We have checked that the conjecture holds when $(M,\l)=(G,0)$. The epipelagic character sheaves $\cS_\xi$ in \eqref{epi CS} are the next open cases, for which we only verified the case of $\SL_2$.

\sss{Inner structure of the blocks}
By \eqref{CS reduce to M}, the study of a general block in $\CS(L\frg)$ can be reduced to the study of a nilpotent block of a twisted Levi subgroup. We expect that each nilpotent block (hence any block) of $\CS_{\xi}(L\frg)$ is equivalent to a category of modules over a certain double affine Hecke algebra. All this is parallel to the generalized Springer correspondence, and to the known properties of the  usual Bernstein blocks for $p$-adic groups.

\end{document}